\newtheorem{thm}{Theorem}[section]
\newtheorem{prop}[thm]{Proposition}
\newtheorem{lem}[thm]{Lemma}
\newtheorem{cor}[thm]{Corollary}
\theoremstyle{definition}
\newtheorem{defn}[thm]{Definition}
\newtheorem{remk}[thm]{Remark}
\newtheorem{remks}[thm]{Remarks}
\newtheorem{exm}[thm]{Example}
\newtheorem{exms}[thm]{Examples}
\newtheorem{notat}[thm]{Notation}
\numberwithin{equation}{section}
\newcommand{\sE}{{\mathcal E}}
\newcommand{\sL}{{\mathcal L}}
\newcommand{\sU}{{\mathcal U}}
\newcommand{\sV}{{\mathcal V}}
\newcommand{\A}{{\mathbb A}}
\newcommand{\C}{{\mathbb C}}
\newcommand{\F}{{\mathbb F}}
\newcommand{\G}{{\mathbb G}}
\newcommand{\bL}{{\mathbb L}}
\renewcommand{\P}{{\mathbb P}}
\newcommand{\Q}{{\mathbb Q}}
\newcommand{\Z}{{\mathbb Z}}
\newcommand{\surj}{\twoheadrightarrow}
\newcommand{\inj}{\hookrightarrow}
\newcommand{\Hom}{{\rm Hom}}
\newcommand{\del}{\partial}
\newcommand{\End}{{\operatorname{\text{End}}}}
\newcommand{\ds}{{/\kern-3pt/}}
\newcommand{\wh}{\widehat}
\renewcommand{\dim}{\text{\rm dim}}
\newcommand{\tuborg}{\left\{\begin{array}{ll}}
\newcommand{\sluttuborg}{\end{array}\right.}
\begin{document}
\title{Equivariant cobordism of flag varieties and of symmetric varieties}
\author{Valentina Kiritchenko}
\thanks{The first author was partially supported by the Dynasty Foundation fellowship and by grants: RFBR 10-01-00540-a, RFBR-CNRS 10-01-93110-a, AG Laboratory NRU-HSE, RF government grant, ag. 11.G34.31.0023, RF Federal Innovation Agency 02.740.11.0608, RF Ministry of Education and Science 16.740.11.0307}

\address{Faculty of Mathematics and Laboratory of Algebraic Geometry\\
Higher School of Economics\\
Vavilova St. 7, 112312 Moscow, Russia\\
and\\
Institute for Information Transmission Problems, Moscow}

\email{vkiritchenko@yahoo.ca}

\author{Amalendu Krishna}
\address{School of Mathematics, Tata Institute of Fundamental Research,
Homi Bhabha Road, Colaba, Mumbai, India}
\email{amal@math.tifr.res.in}

\baselineskip=10pt

\keywords{Algebraic cobordism, group actions}

\subjclass[2010]{Primary 14C25; Secondary 19E15}

\begin{abstract}
We obtain an explicit presentation for the equivariant cobordism ring of a complete
flag variety. An immediate corollary is a Borel presentation for the ordinary cobordism ring.
Another application is an equivariant Schubert calculus in cobordism.
We also describe the rational equivariant
cobordism rings of wonderful symmetric varieties of minimal rank.
\end{abstract}

\maketitle

\section{Introduction}\label{section:Intro}
Let $k$ be a field of characteristic zero, and $G$ a connected reductive group
split over $k$. Recall that a smooth {\em spherical variety} is a smooth $k$-scheme $X$ with an action of $G$
and a dense orbit of a Borel subgroup of $G$. Well-known examples of spherical varieties include flag varieties, toric varieties and wonderful compactifications of symmetric spaces. In this paper, we study the
equivariant cobordism rings of the following two classes of spherical varieties: the flag varieties and the wonderful symmetric varieties of minimal rank (the latter include wonderful compactifications of  semisimple groups of adjoint type).

The equivariant cohomology and the equivariant Chow groups of these two
classes of spherical varieties have been extensively studied before in
\cite{BDP}, \cite{LP}, \cite{Brion2}, \cite{Brion3}, and \cite{BJ}.
Based on the theory of algebraic cobordism by Levine and Morel \cite{LM},
and the construction of equivariant Chow groups by Totaro \cite{Totaro}
and Edidin-Graham \cite{EG}, the equivariant cobordism was
initially introduced in \cite{DP} for smooth varieties.
It was subsequently developed into a complete theory of equivariant
oriented Borel-Moore homology for all $k$-schemes in \cite{Krishna1}.
Similarly to equivariant cohomology, equivariant cobordism is a powerful tool for computing ordinary cobordism of the varieties with a group action.
The techniques of equivariant cobordism have been recently exploited to give
explicit descriptions of the ordinary cobordism rings of smooth toric varieties
in \cite{KU}, and that of the flag bundles over smooth schemes in
\cite{Krishna4}

In this paper, we give an explicit description of the equivariant
cobordism ring of a complete flag variety.
The ordinary cobordism rings of such varieties have been recently
described by Hornbostel--Kiritchenko \cite{HK} and Calm{\`e}s--Petrov--Zainoulline \cite{CPZ}.
Let $B\subset G$ be a Borel subgroup containing a split maximal torus $T$.
In Theorem \ref{thm:ECF-MainT}, we obtain an explicit presentation for $\Omega^*_T(G/B)$ tensored with $\Z[t_G^{-1}]$, where $t_G$ is the torsion index of $G$ (see Section \ref{section:MAINGB} for a definition).
As a consequence, one immediately obtains an expression for the ordinary cobordism rings of complete flag varieties (tensored with $\Z[t_G^{-1}]$) using a simple relation between the equivariant and the ordinary cobordism ({\sl cf.} \cite[Theorem~3.4]{Krishna2}). We also outline an equivariant Schubert calculus in
$\Omega^*_T(G/B)$ (see Subsection \ref{section:Dem}).

To compute $\Omega^*_T(G/B)$, we first prove the comparison theorems which relate the equivariant algebraic and complex cobordism rings of cellular varieties (see Section~\ref{section:ACC}) and then compute the equivariant complex cobordism $MU^{2*}_T(G/B)$ (see Section \ref{section:MAINGB}). The highlight of our proof
is that it only uses elementary techniques of equivariant geometry and
does not use any computation of the ordinary cobordism or cohomology.

In Section~\ref{section:CWSV}, we describe the rational $T$-equivariant
cobordism rings of wonderful symmetric varieties of minimal rank.
Again, this implies a description for their ordinary cobordism rings.
In particular, one gets a presentation for the cobordism ring of the wonderful compactification of an adjoint semisimple group.
The main ingredient of the proof is the localization theorem
for the equivariant cobordism rings for torus action
\cite[Theorem~7.8]{Krishna2}.
Once we have this tool, the final result
is obtained by adapting the argument of Brion-Joshua \cite{BJ} who obtained
an analogous presentation for the equivariant Chow ring.
As it turns out, similar steps
can be followed to compute the equivariant cobordism ring of any regular
compactification of a symmetric space of minimal rank.

\section{Recollection of equivariant cobordism}
\label{section:EAC}
In this section, we recollect the basic definitions and
properties of equivariant cobordism that we shall need in the sequel. For more details see
\cite{Krishna1}.
Let $k$ be a field of characteristic zero and let $G$ be a connected linear
algebraic group over $k$.

Let $\sV_k$ denote the category of quasi-projective $k$-schemes and
let $\sV^S_k$ denote the full subcategory of smooth quasi-projective
$k$-schemes. We denote the category of quasi-projective $k$-schemes with
linear $G$-action and $G$-equivariant maps by $\sV_G$ and the corresponding
subcategory of smooth schemes will be denoted by $\sV^S_G$.
In this text, a {\sl scheme} will always mean an object of $\sV_k$
and a $G$-scheme will mean an object of $\sV_G$. For all the definitions and
properties of algebraic cobordism that are used in this paper, we refer
the reader to \cite{LM}. All representations of $G$ will be finite-dimensional.
Let $\bL$ denote the Lazard ring which is same as the cobordism ring
$\Omega^*(k)$.

Recall the notion of a {\sl good pair}. For integer $j \ge 0$, let $V_j$ be a $G$-representation, and  $U_j \subset V_j$ an open subset such that the codimension of the complement is at least $j$. The pair  $\left(V_j,U_j\right)$ is called a {\sl good pair} corresponding to $j$ for the $G$-action if $G$ acts freely on $U_j$ and the quotient ${U_j}/G$ is a quasi-projective scheme.
Quotients ${U_j}/G$ approximate algebraically the {\em classifying space} $B_G$ (which is not algebraic) while $U_j$ approximate the {\em universal space} $E_G$.
It is known that such good pairs always exist.

Let $X$ be a smooth $G$-scheme.
For each $j \ge 0$, choose  a good pair $(V_j, U_j)$
corresponding to $j$.
For $i \in \Z$, set
\begin{equation}\label{eqn:E-cob*}
{\Omega_G^i(X)}_j =  \frac{\Omega^{i}\left({X\stackrel{G} {\times} U_j}\right)}
{F^{j}\Omega^{i}\left({X\stackrel{G} {\times} U_j}\right)}.
\end{equation}
Then it is known (\cite[Lemma~4.2, Remark~4.6]{Krishna1}) that
${\Omega_G^i(X)}_j$ is independent of the
choice of the good pair $(V_j, U_j)$. Moreover, there is a natural surjective
map $\Omega_G^i(X)_{j'} \surj \Omega_G^i(X)_j$ for $j' \ge j \ge 0$.
Here, $F^{\bullet}\Omega^*(X)$ is the coniveau filtration on $\Omega^*(X)$, i.e.
$F^j\Omega^*(X)$ is the set of all cobordism cycles $x \in \Omega^*(X)$
such that $x$ dies in $\Omega^*(X \setminus Y)$, where $Y \subset X$ is closed
of codimension at least $j$ ({\sl cf.} \cite[Section~3]{DP}).

\begin{defn}\label{defn:ECob}
Let $X$ be a smooth $k$-scheme with a $G$-action. For any
$i \in \Z$, we define the {\sl equivariant algebraic cobordism} of $X$ to be
\begin{equation}\label{eqn:Csmooth1}
\Omega^i_G(X) = {\underset {j} \varprojlim} \
{\Omega_G^i(X)}_j.
\end{equation}
\end{defn}

The reader should note from the above definition that unlike the ordinary
cobordism, the equivariant algebraic cobordism $\Omega_G^i(X)$ can be
non-zero for any $i \in \Z$. We set
\[
\Omega_G^*(X) = {\underset{i \in \Z} \bigoplus} \ \Omega_G^i(X).
\]
It is known that if $G$ is trivial, then the $G$-equivariant cobordism
reduces to the ordinary one.

\begin{remk}\label{remk:Csing}
If $X$ is a $G$-scheme of dimension $d$, which is not necessarily smooth, one defines the
equivariant cobordism of $X$ by
\begin{equation}
{\Omega^G_i(X)}_j =  {\underset {j} \varprojlim} \
\frac{\Omega_{i+l_j-g}\left({X\stackrel{G} {\times} U_j}\right)}
{F_{d+l_j-g-j}\Omega_{i+l_j-g}\left({X\stackrel{G} {\times} U_j}\right)},
\end{equation}
where $g=\dim(G)$ and $l_j=\dim(U_j)$.
Here, $F_{\bullet}\Omega_*(X)$ is the niveau filtration on $\Omega_*(X)$ such
that $F_j\Omega_*(X)$ is the union of the images of the natural
$\bL$-linear maps $\Omega_*(Y) \to \Omega_*(X)$ where $Y \subset X$ is closed
of dimension at most $j$. It is known that if $X$ is smooth of dimension
$d$, then $\Omega^i_G(X) \cong \Omega^G_{d-i}(X)$.
Since we shall be dealing mostly
with the smooth schemes in this paper, we do not need this definition
of equivariant cobordism.
\end{remk}

It is known that $\Omega_G^*(X)$ satisfies all the properties of a
multiplicative oriented cohomology theory like the ordinary cobordism.
In particular, it has pull-backs, projective push-forward,
Chern class of equivariant bundles, exterior and internal products,
homotopy invariance and projection formula. We refer to
\cite[Theorem~5.4]{Krishna1} for further detail.

The $G$-equivariant cobordism group $\Omega^*_G(k)$ of the ground field $k$
is denoted by $\Omega^*(B_G)$ and is called the cobordism of the
{\sl classifying space} of $G$. We shall often write it as $S(G)$.
We also recall the following result which gives a simpler description
of the equivariant cobordism and which will be used throughout this paper.

\begin{thm}\label{thm:NO-Niveu}$($\cite[Theorem~6.1]{Krishna1}$)$
Let ${\{(V_j, U_j)\}}_{j \ge 0}$ be a sequence of
good pairs for the $G$-action such that \\
$(i)$ $V_{j+1} = V_j \oplus W_j$ as representations of $G$ with ${\rm dim}(W_j)
> 0$ and \\
$(ii)$ $U_j \oplus W_j \subset U_{j+1}$ as $G$-invariant open subsets. \\
Then for any smooth scheme $X$ as above and any $i \in \Z$,
\[
\Omega_G^i(X) \xrightarrow{\cong} {\underset{j}\varprojlim} \
\Omega^{i}\left(X \stackrel{G}{\times} U_j\right).
\]
Moreover, such a sequence ${\{(V_j, U_j)\}}_{j \ge 0}$ of good pairs always
exists.
\end{thm}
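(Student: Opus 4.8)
The plan is to show that for the special sequence of good pairs described in $(i)$ and $(ii)$, the coniveau-filtration quotients that appear in Definition~\ref{defn:ECob} stabilize, so that the inverse limit of the $\Omega^i_G(X)_j$ agrees with the inverse limit of the unquotiented groups $\Omega^i(X \stackrel{G}{\times} U_j)$. Concretely, I would first observe that since $V_{j+1} = V_j \oplus W_j$ and $U_j \oplus W_j \subset U_{j+1}$ is an open $G$-invariant subset, passing to quotients by the (free) $G$-action gives an open immersion $(X \stackrel{G}{\times} U_j) \times W_j \hookrightarrow X \stackrel{G}{\times} U_{j+1}$ whose complement has codimension $\geq j+1$ (this codimension bound is exactly what the good-pair hypothesis on $U_{j+1}$ buys us, together with $\dim W_j > 0$). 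Homotopy invariance of algebraic cobordism identifies $\Omega^i\bigl((X \stackrel{G}{\times} U_j)\times W_j\bigr)$ with $\Omega^i(X \stackrel{G}{\times} U_j)$, and the localization sequence for the closed complement shows the restriction map $\Omega^i(X \stackrel{G}{\times} U_{j+1}) \to \Omega^i(X \stackrel{G}{\times} U_j)$ is surjective with kernel contained in $F^{j+1}\Omega^i(X \stackrel{G}{\times} U_{j+1})$.

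Next I would feed this into the definition of the coniveau filtration. The key comparison is that under the surjection $\Omega^i(X\stackrel{G}{\times}U_{j+1}) \surj \Omega^i(X\stackrel{G}{\times}U_j)$, the submodule $F^{j}\Omega^i(X\stackrel{G}{\times}U_{j+1})$ maps onto $F^{j}\Omega^i(X\stackrel{G}{\times}U_j)$, while anything in $F^{j+1}$ on the $(j+1)$-level already dies on the $j$-level. Combining these, the induced maps on quotients fit into a commutative ladder showing that the pro-system $\{\Omega^i_G(X)_j\}_j$ and the pro-system $\{\Omega^i(X\stackrel{G}{\times}U_j)\}_j$ have the same inverse limit: the natural map from $\varprojlim_j \Omega^i(X\stackrel{G}{\times}U_j)$ to $\varprojlim_j \Omega^i_G(X)_j = \Omega^i_G(X)$ is an isomorphism, because at each stage the difference between the two is killed one step later in the tower. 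This is essentially the content of \cite[Theorem~6.1]{Krishna1}, so I would cite that for the technical bookkeeping while giving the above as the conceptual skeleton.

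Finally, for the existence statement, I would construct the sequence explicitly. Fix a faithful representation $\rho$ of $G$ on a vector space $E$; then $G$ acts freely on an open subset $E^{\circ} \subset E$ whose complement has some fixed codimension $c \geq 1$, and $E^{\circ}/G$ is quasi-projective. Take $V_j = E^{\oplus m_j}$ for a suitable increasing sequence $m_j$, and let $U_j$ be the open locus in $V_j$ on which $G$ acts freely and whose quotient is quasi-projective; by choosing $m_j$ growing fast enough (so that $c\, m_j$ outpaces $j$) one arranges that the codimension of $V_j \setminus U_j$ is $\geq j$, that $V_{j+1} = V_j \oplus W_j$ with $W_j = E^{\oplus(m_{j+1}-m_j)}$ of positive dimension, and that $U_j \oplus W_j \subset U_{j+1}$. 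The main obstacle in the whole argument is the codimension estimate for the complement of $(X\stackrel{G}{\times}U_j)\times W_j$ inside $X \stackrel{G}{\times} U_{j+1}$ — one must check that quotienting by the free $G$-action does not decrease codimension and that the relevant complement is genuinely of codimension $\geq j+1$ rather than just $\geq j$; everything downstream is a formal consequence of localization and homotopy invariance once that bound is in hand.
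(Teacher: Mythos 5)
The statement is quoted by the paper from \cite[Theorem~6.1]{Krishna1} without proof, so your proposal stands on its own, and its central mechanism has a genuine gap: the codimension estimate. The complement of $X\stackrel{G}{\times}(U_j\oplus W_j)$ in $X\stackrel{G}{\times}U_{j+1}$ only has codimension $\ge j$, not $\ge j+1$: it comes from $U_{j+1}\cap\bigl((V_j\setminus U_j)\oplus W_j\bigr)$, and adding the factor $W_j$ does not increase the codimension of $V_j\setminus U_j$, while the good-pair condition on $U_{j+1}$ controls $V_{j+1}\setminus U_{j+1}$, which is a different set. In the paper's own standard torus sequence ($V_j=\A^j$, $U_j=\A^j\setminus\{0\}$ for $\G_m$) the codimension is exactly $j$. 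Consequently the kernel of the transition map is only contained in $F^{j}\Omega^i(X\stackrel{G}{\times}U_{j+1})$, and your pivotal step that ``anything in $F^{j+1}$ at level $j+1$ dies at level $j$'' is unjustified (it would in any case also require knowing that the coniveau filtration is compatible with the \emph{inverse} of the homotopy-invariance isomorphism, which is not formal). A smaller inaccuracy: $X\stackrel{G}{\times}(U_j\oplus W_j)$ is a vector bundle over $X\stackrel{G}{\times}U_j$, not a product, though homotopy invariance still applies.

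The missing idea, which is what actually drives the cited result and renders most of your bookkeeping unnecessary, is a vanishing statement: for fixed $i$ and $X$ one has $F^j\Omega^i\bigl(X\stackrel{G}{\times}U_j\bigr)=0$ as soon as $j>i$. Indeed, by the localization sequence such a class is the image of $\Omega_{d_j-i}(Y)$ for some closed $Y$ of codimension $\ge j$, where $d_j=\dim\bigl(X\stackrel{G}{\times}U_j\bigr)$, and $\Omega_m(Y)=0$ for $m>\dim Y$, while here $d_j-i>d_j-j\ge\dim Y$. Hence the tower of quotients $\Omega^i_G(X)_j$ coincides verbatim with the tower $\Omega^i\bigl(X\stackrel{G}{\times}U_j\bigr)$ in the range $j>i$, so the two inverse limits agree; hypotheses $(i)$ and $(ii)$ are needed only to produce the transition maps (open restriction followed by vector-bundle homotopy invariance) compatibly with the quotient maps, with no pro-triviality argument required. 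Finally, in your existence construction an arbitrary faithful representation need not admit any open locus with free action (for the adjoint representation of $PGL_2$ every stabilizer is nontrivial); the standard construction instead embeds $G\subset \GL_n$ and takes $V_j=\Hom(k^{n+j},k^n)$ with $U_j$ the locus of surjections, which visibly satisfies $(i)$ and $(ii)$ and has quasi-projective quotients.
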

 For the rest of this text, a {\sl sequence of good pairs}
${\{(V_j, U_j)\}}_{j \ge 0}$ will always mean a sequence as in
Theorem~\ref{thm:NO-Niveu}.

\subsection{Equivariant cobordism of the variety of complete flags in $k^n$}
To illustrate the definition of equivariant cobordism, we now compute $\Omega^*_T (G/B)$ for $G=GL_n(k)$.
Note that we will use a different (and computationally less involved) approach in Section \ref{section:MAINGB} where we compute $\Omega^*_T (G/B)$ for all reductive groups $G$.

We identify the points of the complete flag variety $X=G/B$
with {\em complete flags} in $k^n$.
A {\em complete flag} $F$ is  a strictly increasing sequence of subspaces
$$F=\{\{0\}=V^0\subsetneq V^1\subsetneq V^2\subsetneq\ldots\subsetneq V^n=k^n\}$$ with $\dim(V^k)=k$.
There are $n$ natural line bundles $\sL_1$,\ldots, $\sL_n$ on $X$, that is,
the fiber of $\sL_i$ at the point $F$ is equal to $V^{i}/V^{i-1}$.
These bundles are equivariant with respect to the left action of  the diagonal torus
$T\subset GL_n(k)$ on $X$, namely, $\sL_i$ corresponds to the character $\chi_i$ of $T$
given by the $i$-th entry of $T$.
For each $i=1,\ldots,n$, consider also the $T$-equivariant line bundle $L_i$ on ${\rm Spec}(k)$ corresponding to the character $\chi_i$.
In what follows, $\bL[[x_1,\ldots,x_n;t_1,\ldots,t_n]]$ denotes the {\bf graded} power
series ring in $x_1$,\ldots, $x_n$ and $t_1$,\ldots, $t_n$. Recall that for a graded ring $R$,
the {\em graded} power series ring $R[[x_1,\ldots,x_n]]$ consists of all finite linear combinations
of homogeneous (with respect to the total grading) power series (e.g., if $R$ has no terms of negative
degree then $R[[x_1,\ldots,x_n]]$ is just a ring of polynomials).

\begin{thm} \label{thm:GL_n} There is the following isomorphism
$$\Omega^*_T(X)\simeq\bL[[x_1,\ldots,x_n;t_1,\ldots,t_n]]/(s_i(x_1,\ldots,x_n)-s_i(t_1,\ldots,t_n), i=1,\ldots,n),$$
where $s_i(x_1,\ldots,x_n)$ denotes the $i$-th elementary symmetric function of the
variables $x_1$,\ldots, $x_n$.
The isomorphism sends $x_i$ and $t_i$, respectively, to the first $T$-equivariant Chern
classes $c_1^T(\sL_i)$ and $c_1^T(L_i)$.
\end{thm}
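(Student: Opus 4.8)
The plan is to compute $\Omega^*_T(X)$ directly from the definition: by Theorem~\ref{thm:NO-Niveu} it equals $\varprojlim_j \Omega^*\big(X \stackrel{T}{\times} U_j\big)$ for a sequence of good pairs $\{(V_j,U_j)\}$, and the point is to recognize each mixed space $X \stackrel{T}{\times} U_j$ as a complete flag bundle and apply the projective bundle formula of Levine--Morel \cite{LM}. For the geometry: restricting the standard representation $k^n$ of $GL_n(k)$ to $T$ splits it as $k_{\chi_1}\oplus\cdots\oplus k_{\chi_n}$, so, writing $Y_j := U_j/T$ and $L_i^{(j)} := k_{\chi_i}\stackrel{T}{\times}U_j$, the associated rank-$n$ bundle $E_j := k^n \stackrel{T}{\times} U_j$ on $Y_j$ splits as $E_j = L_1^{(j)}\oplus\cdots\oplus L_n^{(j)}$. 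Since $X$ is the variety of complete flags in $k^n$, forming the associated bundle identifies $X \stackrel{T}{\times} U_j$ with the complete flag bundle $\pi_j \colon \mathrm{Fl}(E_j)\to Y_j$; under this identification $\sL_i$ descends to the $i$-th graded piece $\sL_i^{(j)}$ of the tautological flag on $\mathrm{Fl}(E_j)$ and $L_i$ pulls back to $L_i^{(j)}$, compatibly with the transition maps in $j$.

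Next I would establish the flag bundle presentation over each $Y_j$. Realizing $\pi_j$ as a tower of projective bundles and iterating the projective bundle formula of \cite{LM} (compare the flag bundle computation of \cite{Krishna4}), $\Omega^*(\mathrm{Fl}(E_j))$ is a free $\Omega^*(Y_j)$-module with basis the monomials $x_1^{a_1}\cdots x_n^{a_n}$, $0\le a_i \le n-i$, where $x_i := c_1(\sL_i^{(j)})$; using Chern classes of the tautological subquotients (rather than of $\sO(1)$) the relation introduced at each stage is monic with coefficients, up to sign, the Chern classes of that subquotient, so the Whitney formula gives $c_i(E_j) = s_i(x_1,\ldots,x_n)$ in $\Omega^*(\mathrm{Fl}(E_j))$. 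On the other hand $R_j := \Omega^*(Y_j)[x_1,\ldots,x_n]/\big(s_i(x_1,\ldots,x_n)-c_i(E_j) : 1\le i\le n\big)$ is the universal splitting algebra of a monic polynomial of degree $n$, hence free over $\Omega^*(Y_j)$ of rank $n!$ on the same monomial basis; the evident surjection $R_j \twoheadrightarrow \Omega^*(\mathrm{Fl}(E_j))$ (the $x_i$ generate and the relations hold) is a surjection of free modules of equal finite rank over a commutative ring, hence an isomorphism. Thus $\Omega^*(\mathrm{Fl}(E_j)) \cong \Omega^*(Y_j)[x_1,\ldots,x_n]/\big(s_i(x_1,\ldots,x_n)-c_i(E_j)\big)$.

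Then I would pass to the limit. By definition $\varprojlim_j \Omega^*(Y_j) = \Omega^*(B_T) = S(T)$, which is the graded power series ring $\bL[[t_1,\ldots,t_n]]$ with $t_i = c_1^T(L_i) = \varprojlim_j c_1(L_i^{(j)})$, and since $E_j = \bigoplus_i L_i^{(j)}$ we get $\varprojlim_j c_i(E_j) = s_i(t_1,\ldots,t_n)$. Because the monomial basis is independent of $j$ and the transition maps fix the $x_i$, applying $\varprojlim_j$ to the presentation above shows that $\Omega^*_T(X)$ is the free $S(T)$-module on $\{x^a\}$ subject to $s_i(x_1,\ldots,x_n)=s_i(t_1,\ldots,t_n)$, i.e. $\Omega^*_T(X)\cong S(T)[x_1,\ldots,x_n]/\big(s_i(x)-s_i(t)\big)$. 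Finally, since these relations make each $x_i$ a root of a monic polynomial over $\bL[[t_1,\ldots,t_n]]$, the graded power series ring $\bL[[x_1,\ldots,x_n;t_1,\ldots,t_n]]$ modulo the same relations is again free over $\bL[[t_1,\ldots,t_n]]$ on the monomials $x^a$, so it coincides with $S(T)[x_1,\ldots,x_n]/\big(s_i(x)-s_i(t)\big)$; tracing the line bundles through the identifications gives $x_i\mapsto c_1^T(\sL_i)$ and $t_i\mapsto c_1^T(L_i)$.

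I expect the main obstacle to lie not in any single deep input but in the accumulated bookkeeping around the inverse limit: getting the projective bundle formula to output the relations in the clean elementary-symmetric form, checking that $\varprojlim_j$ commutes with the finite free presentations in a way that respects the $x_i$ and the descended bundles, and identifying the resulting $S(T)$-algebra with the stated graded power series quotient (a Weierstrass-type statement about monic quotients of a graded power series ring). An alternative that sidesteps most of the flag-bundle computation is to invoke the flag bundle description of \cite{Krishna4} directly, applied to the bundle $E_j$ on $U_j/T$.
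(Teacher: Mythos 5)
Your argument is correct, and it shares the paper's skeleton --- realize the Borel-construction approximations of $X$ as complete flag bundles, invoke a flag-bundle presentation of their cobordism, apply the Whitney formula, and pass to the inverse limit --- but you run it with a different model of the classifying space, which changes the intermediate work in a genuine way. The paper first replaces $T$ by the Borel subgroup $B$ (via $\Omega^*_T(X)=\Omega^*_B(X)$), approximates $B_B$ by the partial flag varieties $\F_{N,n}$ with $E_N=\Hom^\circ(k^N,k^n)$, identifies $X\times^B E_N$ with the flag bundle of the tautological quotient bundle $\sE$, quotes \cite[Theorem~2.6]{HK}, and then must compute $\Omega^*(\F_{N,n})$ explicitly by iterated projective bundle formula (this is where the choice of $\F_{N,n}$ over its dual, forced by the fact that Chern classes of dual bundles are not mere sign changes in cobordism, enters); the limit is handled by observing that the extra relations $h_k$ among the $t_i$ occur in degrees tending to infinity with $N$. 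You instead stay with $T$ and its standard good pairs, so $E_j=k^n\times^T U_j$ splits into the line bundles $L_i^{(j)}$ and Whitney gives $c_i(E_j)$ immediately; you never compute $\Omega^*(U_j/T)$ at all, only use the known $\Omega^*_T(k)\cong\bL[[t_1,\ldots,t_n]]$, and the limit is taken degreewise through a finite free presentation (finite direct sums commute with inverse limits). That buys you: no passage through $B$, no $\F_{N,n}$ computation, no dual-bundle bookkeeping, and a cleaner limit argument; the cost is importing $S(T)\cong\bL[[t]]$ rather than rederiving it along the way, and you still owe the same Weierstrass-type identification of $S[x_1,\ldots,x_n]/(s_i(x)-s_i(t))$ with the graded power series quotient that the paper also leaves implicit (your termwise-reduction remark does settle it). One small inaccuracy: in cobordism the relation produced at each stage of the projective-bundle tower is not literally a monic polynomial whose coefficients are, up to sign, the Chern classes of the subquotient --- that is the Chow-theoretic statement, and the cobordism relation involves the formal group law; but this is immaterial, since your splitting-algebra argument only needs (a) freeness of rank $n!$ from the tower, (b) that the classes $x_i=c_1(\sL_i^{(j)})$ generate, which holds because $c_1(\sO(1))$ is a polynomial with $\bL$-coefficients in $c_1(\sO(-1))$ (Chern classes being nilpotent), and (c) the Whitney relations --- or you can simply quote \cite[Theorem~2.6]{HK} or \cite{Krishna4}, exactly as the paper does.
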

\begin{proof}
First, note that $\Omega^*_T(X)=\Omega^*_B(X)$ by \cite[Proposition 8.1]{Krishna1}, where $B$ is a Borel subgroup in $G$
(we choose $B$ to be the subgroup of the upper-triangular matrices).
For $N>n$, we can approximate the classifying space $B_B$ by partial flag varieties
$\F_{N,n}:=\F(N-n, N-n+1,\ldots,N-1,N)$ consisting of all flags
$$F=\{V^{N-n}\subsetneq V^{N-n+1}\subsetneq\ldots\subsetneq V^{N-1}\subsetneq k^N\}.$$
We choose exactly this approximation because its cobordism ring is easier to compute
via projective bundle formula than the cobordism ring of the dual flag variety
$\F(1, 2,\ldots,n; N)$ (for cohomology rings, this difference does not show up since the
Chern classes of dual vector bundles are the same up to a sign for the additive
formal group law).
Approximate $E_B$ by the variety $E_N:=\Hom^\circ(k^N,k^n)$ of all
projections of $k^N$ onto $k^n$.
Note that $\{(\Hom(k^N,k^n),E_N)\}_{N\ge n}$ is a sequence of good pairs as in Theorem \ref{thm:NO-Niveu} for the action of $GL_n$.

Denote by $\sE$ the tautological quotient bundle of rank $n$ on $\F_{N,n}$
(i.e., the fiber of $\sE$ at the point $F$ is equal to $k^N/V^{N-n}$).
For the complete flag variety $X$, we have that $X\times^B E_N$ is the flag variety
$\F(\sE)$ relative to the bundle $\sE$, whose points can be identified with
complete flags in the fibers of $\sE$.
Hence, we can compute the cobordism ring of $X\times^B E_N$ by the formula for the
cobordism rings of relative flag varieties \cite[Theorem 2.6]{HK}.
We get
$$\Omega^*(X\times^B E_N)=\Omega^*(\F(\sE))\simeq \Omega^*(\F_{N,n})[x_1,...,x_n]/I,$$
where $I$ is the ideal generated by the relations
$s_k(x_1,..,x_n)=c_k(\sE)$ for $1 \leq k \leq n$.
The isomorphism sends $x_i$ to the first Chern class of the line bundle
$\sL_i\times^B E_N$ on $X\times^B E_N$.

By the repeated use of the projective bundle formula (as in the proof of
\cite[Theorem 2.6]{HK}) we get that
$$\Omega^*(\F_{N,n})\simeq\bL[t_1,\ldots,t_n]/(h_N(t_n),h_{N-1}(t_{n-1},t_{n}),
\ldots,h_{N-n+1}(t_1,\ldots,t_n)),$$
where $t_i$ is the first Chern class of the $i$-th tautological line bundle on $\F_{N,n}$
(whose fiber at the point $F$ is equal to $V^{N-i+1}/V^{N-i}$),
and $h_k(t_i,\ldots,t_n)$ denotes the sum of all monomials of degree $k$ in
$t_i$,\ldots,$t_n$.

It is easy to deduce from the Whitney sum formula that $c_k(\sE)=s_k(t_1,\ldots,t_n)$.
Passing to the limit we get that $\Omega^i_B(X):={\underset{N}\varprojlim} \
\Omega^i(X\times^B E_N)$ consists
of all homogeneous power series of degree $i$ in $t_1$,\ldots, $t_{n}$ and
$x_1$,\ldots, $x_n$  modulo the relations $s_k(x_1,\ldots,x_n)=s_k(t_1,\ldots,t_n)$ for $1 \leq k \leq n$.
Indeed, all relations between $t_1$, \ldots, $t_n$ in $\Omega^*(\F_{N,n})$ are in degree
greater than $i$ if $N>i+n-1$.
\end{proof}

\section{Algebraic and complex cobordism}\label{section:ACC}
In this section, we assume our ground field to be the field of complex numbers
$\C$. To describe the equivariant algebraic cobordism ring
of flag varieties we first describe the equivariant complex
cobordism and then use some comparison results between the algebraic and
complex cobordism. Our main goal in this section is to establish such
comparison theorems.

For a $\C$-scheme $X$, the term $H^*(X, A)$ will denote the
singular cohomology of the space $X(\C)$ with coefficients in an abelian group
$A$. We shall use the notation $MU^*(X, A)$ for the term
$MU^*(X) \otimes_{\Z} A$, where $MU^*(-)$ denotes the complex cobordism,
a generalized cohomology theory on the category of CW-complexes.

Recall from \cite[\S 2]{Panin} that $X \mapsto MU^*(X(\C))$ is an example of an
oriented cohomology theory on $\sV^S_{\C}$. In fact, it is the universal
oriented cohomology theory in the category of CW-complexes which is
multiplicative in the sense that it has exterior and internal products.
One knows that $X \mapsto H^*(X, \Z)$ is also an example of a multiplicative
oriented cohomology theory on $\sV^S_{\C}$.

\subsection{Equivariant complex cobordism}\label{subsection:ECC*}
Recall (\cite[Section~ 7]{Krishna1}) that if $G$ is a complex Lie group and
$X$ is a finite CW-complex with a $G$-action, then its Borel {\sl equivariant
complex cobordism} is defined as
\begin{equation}\label{eqn:ECompC-R}
MU^*_G(X) := MU^*\left(X \stackrel{G}{\times} E_G\right),
\end{equation}
where $E_G \to B_G$ is a universal principal $G$-bundle and it is known that
$MU^*_G(X)$ is independent of the choice of this universal bundle.

\begin{defn}\label{defn:cobGP}
Let $\sU = \{(V_j, U_j)\}_{j \ge 0}$ be a sequence of good pairs for $G$-action.
For a linear algebraic group $G$ acting on
a $\C$-scheme $X$ and for any $i \in \Z$, we define
\begin{equation}\label{eqn:EQU}
MU^i_G\left(X, \sU\right) : = {\underset{j \ge 0}\varprojlim} \
MU^i\left(X \stackrel{G}{\times}U_j\right)
\end{equation}
and set $MU^*_G\left(X, \sU\right) = {\underset{i \in \Z} \bigoplus} \
MU^i_G\left(X, \sU\right)$.
We also set
\begin{equation}\label{eqn:EQU*1}
\Omega_G^i\left(X, \sU\right) : = {\underset{j \ge 0}\varprojlim} \
\Omega^{i}\left(X \stackrel{G}{\times}U_j\right) \ {\rm and} \
\Omega^*\left(X, \sU\right) = {\underset{i \in \Z} \bigoplus} \
\Omega_G^i\left(X, \sU\right).
\end{equation}
\end{defn}

It is easy to check as in \cite[Theorem~5.4]{Krishna1} that
$MU^*_G(-, \sU)$ and $\Omega_G^*\left(-, \sU\right)$ have all the functorial
properties of the equivariant cobordism. In particular, both are
contravariant functors on $\sV^S_G$ and $\Omega_G^*\left(-, \sU\right)$ is
also covariant for  projective maps. Moreover, the pull-back and the
push-forward maps commute with each other in a fiber diagram of smooth and
projective morphisms.

\begin{lem}\label{lem:Cob-lim}
Let $\sU = \{(V_j, U_j)\}_{j \ge 1}$ be a sequence of good pairs for the
$G$-action and  let $X$ be a smooth $G$-scheme such that
$H^*_G(X, \Z)$ is torsion-free.
There is an isomorphism
$MU^i_G(X) \to MU_G^i\left(X, \sU\right)$ of abelian groups for any
$i \in \Z$.
\end{lem}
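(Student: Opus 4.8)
The plan is to realize both sides as inverse limits of the same system and use a convergence/$\lim^1$ argument. First I would recall that by \cite[Lemma~4.2, Remark~4.6]{Krishna1} and the construction of $E_G$, the space $X \stackrel{G}{\times} E_G$ has the homotopy type of a CW-complex whose skeleta are approximated by the spaces $X \stackrel{G}{\times} U_j$: indeed, since $\dim(V_j \setminus U_j) $ grows with $j$, the inclusions $X \stackrel{G}{\times} U_j \hookrightarrow X \stackrel{G}{\times} E_G$ become highly connected as $j \to \infty$, so that $X \stackrel{G}{\times} E_G \simeq \hocolim_j \left(X \stackrel{G}{\times} U_j\right)$ after possibly passing to the sequence of good pairs of Theorem~\ref{thm:NO-Niveu} (which we may, since $MU^*_G(X)$ is independent of the choice of universal bundle, and any two sequences of good pairs are cofinal in a common one). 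This reduces the claim to the Milnor exact sequence for $MU^*$ applied to the tower $\{X \stackrel{G}{\times} U_j\}_j$.

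Next I would invoke the Milnor $\lim^1$ sequence
\[
0 \to {\varprojlim_j}^1 \, MU^{i-1}\!\left(X \stackrel{G}{\times} U_j\right) \to MU^i_G(X) \to {\varprojlim_j} \, MU^{i}\!\left(X \stackrel{G}{\times} U_j\right) \to 0,
\]
the right-hand term being exactly $MU^i_G(X, \sU)$ by Definition~\ref{defn:cobGP}. So it suffices to show the $\lim^1$ term vanishes. For this I would use the torsion-freeness hypothesis on $H^*_G(X, \Z)$: each $X \stackrel{G}{\times} U_j$ is a smooth variety, hence (as a finite approximation built from flag-type bundles and quotients) one checks its integral cohomology is torsion-free and concentrated so that the Atiyah--Hirzebruch spectral sequence $H^p(X \stackrel{G}{\times} U_j, MU^q(\mathrm{pt})) \Rightarrow MU^{p+q}(X \stackrel{G}{\times} U_j)$ degenerates. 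Consequently each $MU^*(X \stackrel{G}{\times} U_j)$ is a free module over $MU^*(\mathrm{pt}) = \bL$ on a basis lifting an integral cohomology basis, and the transition maps in the tower are, degreewise in each fixed total degree $i$, surjections between finitely generated free $\bL^{\le i}$-modules that stabilize for $j \gg i$ (since any new cohomology classes appear only in codimension $\geq j$, which is killed in $MU^i_G(X)_j$ for $j > i$). A tower that is eventually constant — or more weakly, Mittag--Leffler — in each degree has vanishing $\lim^1$, giving the result.

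The main obstacle, and the step I would spend the most care on, is the degeneration of the Atiyah--Hirzebruch spectral sequence for $X \stackrel{G}{\times} U_j$ from the hypothesis that $H^*_G(X,\Z)$ alone is torsion-free: one must argue that the finite approximations inherit torsion-freeness, which is where the structure of $U_j$ (an open in a representation, with complement of high codimension, so $H^*(U_j/G,\Z)$ agrees with $H^*(B_G,\Z)$ in low degrees) and the Leray--Hirsch property for the fibration $X \stackrel{G}{\times} U_j \to U_j/G$ with fiber $X$ enter. Once degeneration is in hand, the freeness over $\bL$ and the Mittag--Leffler condition are formal, and the $\lim^1$ vanishing — hence the isomorphism $MU^i_G(X) \xrightarrow{\cong} MU^i_G(X,\sU)$ — follows immediately. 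I would also remark that the same torsion-free input will be reused when comparing $MU^*_G$ with $\Omega^*_G$ later in the section, so it is natural to isolate the spectral-sequence degeneration as the crux here.
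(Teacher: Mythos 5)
Your overall skeleton (exhaust $X\times^G E_G$ by the finite subcomplexes $X_j=X\times^G U_j$, write down the Milnor sequence, and kill the $\varprojlim^1$ term) is the same as the paper's, but the way you propose to kill $\varprojlim^1$ has a genuine gap. The paper does not argue approximation by approximation at all: since $H^*_G(X,\Z)=H^*(X\times^G E_G,\Z)$ is torsion-free, it simply invokes Landweber's theorem (\cite[Corollary~1]{Landweber}) on elements of infinite filtration in complex cobordism, which asserts the vanishing of $\varprojlim^1 MU^{*}(X_j)$ for a filtration of a space with torsion-free integral cohomology. The hypothesis of the lemma is exactly tailored to that citation, and nothing about the individual $X_j$ is needed.

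Your substitute argument has two problems. First, torsion-freeness of $H^*_G(X,\Z)$ only controls $H^*(X_j,\Z)$ in the range of degrees where the $(j-1)$-connected inclusion $X_j\hookrightarrow X\times^G E_G$ is an isomorphism; in higher degrees the finite approximations can very well have torsion, so the degeneration of the Atiyah--Hirzebruch spectral sequence for each $X_j$ (and hence $\bL$-freeness of $MU^*(X_j)$) is not a consequence of the stated hypothesis, as you yourself flag but do not resolve. Second, and independently, the claimed stabilization of the tower in each fixed total degree $i$ is false: because $\bL$ has elements in arbitrarily negative degree, cohomology of $X_j$ in arbitrarily high degrees contributes to $MU^i(X_j)$, so the groups keep growing with $j$. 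Already for $G=\G_m$ and $X=\mathrm{pt}$ one has $X_j=\P^{j-1}$ and the rank of $MU^0(\P^{j-1})$ over $\Z$ grows with $j$; the tower is Mittag--Leffler there only because the restriction maps happen to be surjective, and your proposal gives no argument for such surjectivity in general. So the $\varprojlim^1$ vanishing is not established by your route; either prove surjectivity of the transition maps $MU^*(X_{j+1})\to MU^*(X_j)$ under the given hypothesis, or do what the paper does and quote Landweber's result, whose whole point is that torsion-freeness of the cohomology of the limit space suffices.
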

\begin{proof}
Since $\sU$ is a sequence of good pairs for the $G$-action, the codimension
of the complement of $U_j$ in the $G$-representation $V_j$ is at least $j$.
In particular, the pair $(V_j, U_j)$ is $(j-1)$-connected. Taking the limit,
we see that $E_G = {\underset{j \ge 0}\bigcup} \ U_j$ is contractible and
hence $E_G \to {E_G}/G$ is the universal principal $G$-bundle and we can take
$B_G = {E_G}/G$. Since $X(\C)$ has the type of a finite CW-complex, we see
that $X_G = X \times^G E_G$ has a filtration by finite subcomplexes
\[
\emptyset = X_{-1} \subset X_0 \subset X_1 \subset \cdots \subset X_i \subset
\cdots \subset X_G
\]
with $X_j = X \times^G U_j$ and $X_G =
{\underset{j \ge 0}\bigcup} \ X_j$.
This yields the Milnor exact sequence
\begin{equation}\label{eqn:Alg-Comp2}
0 \to{\underset{j \ge 0}{\varprojlim}^1} \
MU^{i-1}\left(X_j\right) \to
MU^i_G\left(X\right)  \to
{\underset{j \ge 0}\varprojlim} \
MU^{i}\left(X_j\right) \to  0.
\end{equation}

Since $H^*_G\left(X, \Z\right) = H^*\left(X_G, \Z\right)$ is torsion-free,
it follows from \cite[Corollary~1]{Landweber} that first term in this exact
sequence is zero. This proves the lemma.
\end{proof}

\subsection{Comparison theorem}\label{subsection:CompT}
Recall from \cite[Example~1.9.1]{Fulton} that a scheme over a field $k$
(or an analytic space)
$L$ is called {\sl cellular} if it has a filtration
$\emptyset = L_{n+1} \subsetneq L_n \subsetneq \cdots \subsetneq L_1
\subsetneq L_0 = L$ by closed subschemes (subspaces) such that each $L_i
\setminus L_{i+1}$ is a disjoint union of affine spaces $\A^{r_i}_k$ ({\em cells}).
It follows from the Bruhat decomposition that varieties $G/B$ are cellular with cells labelled by elements of the Weyl group.
We begin with the following elementary and folklore result on cellular schemes.

\begin{lem}\label{lem:CELL}
Let $X$ be a $k$-scheme with a filtration
$\emptyset = X_{n+1} \subsetneq X_n \subsetneq \cdots \subsetneq X_1
\subsetneq X_0 = X$ by closed subschemes such that each $X_i \setminus X_{i+1}$
is a cellular scheme. Then $X$ is also a cellular scheme.
\end{lem}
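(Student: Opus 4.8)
The plan is to concatenate the cellular filtrations of the locally closed strata $Y_i := X_i \setminus X_{i+1}$ into a single cellular filtration of $X$, interpolating between the consecutive closed subschemes $X_{i+1} \subsetneq X_i$ by taking closures in $X$ of the cells of $Y_i$.

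First I would fix, for each $i$, a cellular filtration $Y_i = Y_{i,0} \supsetneq Y_{i,1} \supsetneq \cdots \supsetneq Y_{i,m_i} \supsetneq Y_{i,m_i+1} = \emptyset$ by closed subschemes of $Y_i$ with each $Y_{i,j}\setminus Y_{i,j+1}$ a disjoint union of affine spaces. Since $X_i$ and $X_{i+1}$ are both closed in $X$, the stratum $Y_i$ is open in $X_i$; I would record the elementary fact that for a subset $S$ closed in the open subset $Y_i\subseteq X_i$ one has $\overline{S}\setminus X_{i+1}=S$, where $\overline{S}$ denotes closure in $X$ (equivalently in $X_i$, as $X_i$ is closed in $X$). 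Then I would define, for $0\le i\le n$ and $0\le j\le m_i+1$, the reduced closed subscheme $W_{i,j}:=X_{i+1}\cup\overline{Y_{i,j}}$ of $X$.

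Next I would carry out the bookkeeping. Each $W_{i,j}$ is closed in $X$; one has $W_{i,m_i+1}=X_{i+1}$; and applying the recorded fact with $S=Y_i$ gives $W_{i,0}=X_{i+1}\cup\overline{Y_i}=X_i$, so in particular $W_{i,m_i+1}=X_{i+1}=W_{i+1,0}$. Hence, ordering the pairs $(i,j)$ lexicographically, the $W_{i,j}$ form a descending chain of closed subschemes from $W_{0,0}=X$ down to $W_{n,m_n+1}=\emptyset$. For the strata of this new filtration, a short point-chase using the recorded fact (applied to both $Y_{i,j}$ and $Y_{i,j+1}$) shows $W_{i,j}\setminus W_{i,j+1}=Y_{i,j}\setminus Y_{i,j+1}$, which is a disjoint union of affine spaces. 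Finally, deleting the repeated consecutive terms $W_{i,m_i+1}=W_{i+1,0}$ and relabelling yields a filtration with strict inclusions (strictness inside each block follows from strictness of the chosen cellular filtration of $Y_i$), exhibiting $X$ as cellular.

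The only step I would write out carefully rather than assert is the identity $W_{i,j}\setminus W_{i,j+1}=Y_{i,j}\setminus Y_{i,j+1}$: one must verify that removing $X_{i+1}$ from the closure $\overline{Y_{i,j}}$ returns exactly $Y_{i,j}$, and that a point of $Y_{i,j}$ lying outside $Y_{i,j+1}$ automatically lies outside the closure $\overline{Y_{i,j+1}}$, which holds because $Y_{i,j+1}$ is already closed in the ambient open set $Y_i$. Everything else is formal set manipulation, and no delicacy of scheme structure arises once one works throughout with reduced closed subschemes, since the statement to be proved concerns only the underlying topology together with the identification of the cells as affine spaces.
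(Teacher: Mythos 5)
Your proof is correct and is essentially the paper's argument: both splice the cellular filtrations of the strata onto the chain of closed subschemes, with your $W_{i,j}=X_{i+1}\cup\overline{Y_{i,j}}$ playing exactly the role of the paper's $Y\cup U_i$ (which is closed for the same reason, $\overline{U_i}\subseteq U_i\cup Y$). The only difference is organizational — the paper reduces by induction to the case of one closed piece and its open complement, whereas you concatenate all blocks in a single pass with explicit closures — and your careful verification of $W_{i,j}\setminus W_{i,j+1}=Y_{i,j}\setminus Y_{i,j+1}$ is precisely the check the paper leaves to the reader.
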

\begin{proof}
It follows from our assumption that $X_n$ is cellular. It suffices to prove
by induction on the length of the filtration of $X$ that, if $Y \inj X$ is
a closed immersion of schemes such that $Y$ and $U = X \setminus Y$ are
cellular, then $X$ is also cellular. Consider the cellular decompositions
\[
\emptyset = Y_{l+1} \subsetneq Y_l \subsetneq \cdots \subsetneq Y_1
\subsetneq Y_0 = Y ,
\]
\[
\emptyset = U_{m+1} \subsetneq U_m \subsetneq \cdots \subsetneq U_1
\subsetneq U_0 = U
\]
of $Y$ and $U$. Set
 \[
X_i  =
\left\{ \begin{array}{ll}
Y \cup U_i & \mbox{if $0 \le i \le m+1$} \\
Y_{i-m-1} & \mbox{if $m+2 \le i \le m+l+2 \ .$}
\end{array}
\right .
\]
It is easy to verify that
$\{X_i\}_{0 \le i \le m+l+2}$ is a filtration of $X$ by closed
subschemes such that $X_i \setminus X_{i+1}$ is a disjoint union of affine spaces over $k$.
\end{proof}

Let $T$ be a torus of rank $n$ and let $\sU = \{(V_j, U_j)\}_{j \ge 1}$ be the
sequence of good pairs for $T$-action such that
each $(V_j, U_j) = {(V'_j, U'_j)}^{\oplus n}$, where $V'_j$ is the $j$-dimensional
representation of $\G_m$ with all weights $-1$ and $U'_j$ is the complement
of the origin and $T$ acts on $V_j$ diagonally.

\begin{defn}\label{defn:T-CELL}
A $\C$-scheme (or a scheme over any other field) $X$ with an action of $T$
is called {\sl $T$-equivariantly cellular}, if there is a filtration
$\emptyset = X_{n+1} \subsetneq X_n \subsetneq \cdots \subsetneq X_1
\subsetneq X_0 = X$ by $T$-invariant closed subschemes such that each
$X_i \setminus X_{i+1}$ is isomorphic to a disjoint union of representations $k^{r_i}$ of $T$.
\end{defn}
It follows from a theorem of Bialynicki-Birula \cite{BB}
(generalized to the case of non-algebraically closed fields by Hesselink
\cite{Hessel}) that if $X$ is a smooth projective variety with a $T$-action
such that the fixed point locus $X^T$ is isolated, then $X$ is $T$-equivariantly
cellular. In particular, a complete flag variety $G/B$ or, a smooth
projective toric variety is $T$-equivariantly cellular. It is obvious that
a $T$-equivariantly cellular scheme is cellular in the usual
sense.

\begin{prop}\label{prop:filter-Equiv}
Let $\sU = \{(V_j, U_j)\}_{j \ge 1}$ be as above, and $X$ a smooth
scheme with a $T$-action such that it is $T$-equivariantly cellular.
Then the natural map
\[
\Omega_T^*\left(X, \sU\right) \to MU_T^*\left(X, \sU\right)
\]
is an isomorphism.
\end{prop}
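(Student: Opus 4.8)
The plan is to reduce the statement to the two building blocks it naturally rests on: the fact that an equivariantly cellular $X$ carries a filtration by $T$-invariant closed subschemes whose successive quotients are affine $T$-representations, together with the approximations $X \times^T U_j$ which are honest smooth quasi-projective varieties, cellular in the ordinary sense (by Lemma~\ref{lem:CELL}), and for which algebraic and complex cobordism can be compared directly. First I would observe that the ordinary algebraic cobordism $\Omega^*$ and the ordinary complex cobordism $MU^{2*}(-(\C))$, both viewed as oriented cohomology theories on $\sV^S_\C$, agree on any cellular scheme: one proves this by induction along the cellular filtration, using the localization sequence for $\Omega^*$ and the corresponding long exact sequence of the pair for $MU^*$, the computation of both theories on affine space (each is $\bL$, resp.\ $MU^*(\mathrm{pt})$, in the appropriate degree), and the five lemma — the key point being that for a cellular variety the relevant sequences split into short exact sequences with no odd/negative contributions, so the comparison map is forced to be an isomorphism on each graded piece and the extensions match. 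This is the ``folklore'' part, and the most delicate bookkeeping there is keeping track of the grading/degree shifts so that $\Omega^i$ matches $MU^{2i}$.

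Next I would apply this cell-by-cell comparison not to $X$ itself but to each approximation $X_j := X \times^T U_j$. Because $X$ is $T$-equivariantly cellular, $X \times^T U_j$ inherits a filtration by closed subschemes whose strata are of the form $k^{r_i} \times^T U_j = $ a vector bundle over $U_j/T$; since $U_j/T$ is itself a product of punctured-affine-space quotients (hence cellular), Lemma~\ref{lem:CELL} shows that each stratum, and therefore $X_j$, is a cellular $\C$-scheme. Hence by the previous paragraph the natural transformation
\[
\Omega^*\!\left(X \times^T U_j\right) \xrightarrow{\ \cong\ } MU^{2*}\!\left(X \times^T U_j\right)
\]
is an isomorphism for every $j$. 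Here I would also note that for cellular schemes the coniveau filtration $F^\bullet$ on $\Omega^*$ is well-behaved, so that the description of $\Omega_T^*(X,\sU)$ in Definition~\ref{defn:cobGP} as the plain inverse limit $\varprojlim_j \Omega^*(X\times^T U_j)$ is consistent with \eqref{eqn:EQU*1}, and similarly on the $MU$ side.

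Finally I would pass to the inverse limit over $j$. The isomorphisms above are compatible with the restriction maps induced by $U_j \oplus W_j \subset U_{j+1}$ (this is just naturality of the comparison transformation), so they assemble to an isomorphism $\varprojlim_j \Omega^*(X\times^T U_j) \xrightarrow{\cong} \varprojlim_j MU^{2*}(X\times^T U_j)$, i.e.\ $\Omega_T^*(X,\sU) \xrightarrow{\cong} MU_T^*(X,\sU)$, which is exactly the claim. Strictly speaking I should check that no $\varprojlim^1$ obstruction is hidden on either side before taking the limit, but since each $\Omega^*(X\times^T U_j)$ is a finitely generated $\bL$-module in each degree and the transition maps are surjective in the stable range, the systems are Mittag-Leffler and the limit is exact; alternatively one invokes Lemma~\ref{lem:Cob-lim} together with the torsion-freeness of $H^*_T(X,\Z)$ for equivariantly cellular $X$ to control the $MU$ side. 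I expect the main obstacle to be the first step — making the cell-by-cell comparison genuinely rigorous, in particular verifying that both theories are \emph{free} on the cellular filtration with matching bases, so that the comparison map is an isomorphism rather than merely a map between abstractly isomorphic groups; once that is in hand, the approximation and limit arguments are formal.
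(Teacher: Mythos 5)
Your proposal is correct and follows essentially the same route as the paper: show that each approximation $X\times^T U_j$ is cellular via the induced filtration, Lemma~\ref{lem:CELL} and the cellularity of $U_j/T\cong(\P^{j-1})^n$, invoke the comparison isomorphism between $\Omega^*$ and $MU^{2*}$ for cellular varieties (which the paper simply cites as \cite[Theorem~6.1]{HK} rather than reproving), and pass to the limit over $j$. Your concerns about ${\varprojlim}^1$ and the coniveau filtration are moot here, since $\Omega_T^*\left(X,\sU\right)$ and $MU_T^*\left(X,\sU\right)$ are by definition the plain inverse limits, so the compatible system of isomorphisms immediately induces an isomorphism of the limits.
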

\begin{proof}
For any $\C$-scheme $Y$ with $T$ action, we set $Y^j =
Y \times^T U_j$ for $j \ge 1$.
Consider the $T$-equivariant cellular decomposition of $X$ as in
Definition~\ref{defn:T-CELL} and set $W_i = X_i \setminus X_{i+1}$.
It follows immediately that $X^j$ has a filtration
\[
\emptyset = (X^j)_{n+1} \subsetneq (X^j)_n \subsetneq \cdots \subsetneq (X^j)_1
\subsetneq (X^j)_0 = X^j,
\]
where $(X^j)_i = (X_i)^j =
X_i \times^T U_j$ and thus $(X^j)_i \setminus (X^j)_{i+1}
= (W_i)^j$.

Since ${U_j}/T \cong \left(\P^{j-1}\right)^n$ is cellular
and since
$(W_i)^j = W_i \times^T U_j \to {U_j}/T $ is a disjoint union of vector bundles, it follows that each $(X^j)_i = (W_i)^j$ is cellular.
We conclude from Lemma~\ref{lem:CELL} that $X^j$ is cellular.
In particular, the map $\Omega^*(X^j) \to MU^*(X^j)$ is an
isomorphism ({\sl cf.} \cite[Theorem~6.1]{HK}). The proposition now follows by
taking the limit over $j \ge 1$.
\end{proof}

\begin{lem}\label{lem:Tor-free}
Let $X$ be a $T$-equivariantly cellular scheme.
Then $H^*_T(X, \Z)$ is torsion-free.
\end{lem}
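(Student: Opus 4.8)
The plan is to reduce the torsion-freeness of $H^*_T(X,\Z)$ to the torsion-freeness of the ordinary singular cohomology of the finite-dimensional approximations $X^j := X \times^T U_j$, and then use the cellular structure of these approximations that was already established. First I would recall that, with the sequence $\sU = \{(V_j,U_j)\}_{j\ge 1}$ as fixed above, the spaces $U_j/T \cong (\P^{j-1})^n$ are cellular, and that for a $T$-equivariantly cellular $X$ the proof of Proposition~\ref{prop:filter-Equiv} shows each stratum $(W_i)^j = W_i \times^T U_j$ is a disjoint union of vector bundles over $U_j/T$, hence cellular, so by Lemma~\ref{lem:CELL} each $X^j$ is a cellular scheme (or cellular space in the analytic topology). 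A standard fact about cellular spaces is that their integral singular cohomology is a free $\Z$-module, concentrated in even degrees, with rank equal to the total number of cells; in particular $H^*(X^j,\Z)$ is torsion-free for every $j$.

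Next I would pass to the limit. By definition $H^*_T(X,\Z) = H^*(X_G,\Z)$ where $X_G = X \times^T E_T = \bigcup_j X^j$, and the filtration by the finite subcomplexes $X^j$ gives a Milnor exact sequence
\[
0 \to {\varprojlim_j}^1\, H^{i-1}(X^j,\Z) \to H^i_T(X,\Z) \to \varprojlim_j H^i(X^j,\Z) \to 0.
\]
The restriction maps $H^*(X^{j+1},\Z) \to H^*(X^j,\Z)$ are surjective: this can be seen either from the projective bundle formula applied along the tower $U_{j+1}/T \to U_j/T$ on each cellular stratum, or simply from the fact that the inclusion $X^j \hookrightarrow X^{j+1}$ induces an injection on the free abelian groups generated by cells after identifying the cell structures compatibly. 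Surjectivity of an inverse system of abelian groups makes it satisfy the Mittag--Leffler condition, so the $\varprojlim^1$ term vanishes and $H^i_T(X,\Z) \cong \varprojlim_j H^i(X^j,\Z)$.

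Finally, an inverse limit of a system of torsion-free abelian groups is torsion-free, since it is a subgroup of the product $\prod_j H^i(X^j,\Z)$, and a product of torsion-free groups is torsion-free. This completes the argument: $H^*_T(X,\Z) \cong \varprojlim_j H^*(X^j,\Z)$ is a subgroup of a torsion-free group, hence torsion-free.

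The only genuinely delicate point is establishing that the transition maps in the inverse system are surjective (equivalently, that the system is Mittag--Leffler), which is what guarantees $\varprojlim^1 = 0$; everything else is the elementary homological algebra of inverse limits together with the already-proved cellularity of the approximations $X^j$. I expect this to be the step the author's proof will address most carefully, presumably by invoking the projective bundle formula for the tower $U_{j+1}/T \to U_j/T$ restricted to each stratum, exactly as in the proof of Theorem~\ref{thm:GL_n}, to get the surjectivity on cohomology.
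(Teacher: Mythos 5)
Your argument is correct in substance and shares its first half with the paper: both establish, via Proposition~\ref{prop:filter-Equiv} and Lemma~\ref{lem:CELL}, that each approximation $X^j = X\times^T U_j$ is cellular, hence has free abelian integral cohomology. Where you diverge is in passing from the $X^j$ to $H^*_T(X,\Z)$. You run the Milnor exact sequence and kill $\varprojlim^1$ via Mittag--Leffler, whereas the paper simply observes that $H^i_T(X,\Z)\cong H^i(X^j,\Z)$ for $j\gg 0$ in each fixed degree $i$ (the pair $(V_j,U_j)$ has complement of codimension at least $j$, so $U_j\simeq (S^{2j-1})^n$ is highly connected and the finite approximation computes the Borel cohomology through a range growing with $j$), so torsion-freeness in each degree is immediate and no inverse-limit machinery is needed. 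This stabilization is also the cleanest way to repair the one genuinely shaky step in your write-up: your two proposed justifications for surjectivity of the transition maps are not airtight as stated --- there is no morphism $U_{j+1}/T\to U_j/T$ of the form $(\P^{j})^n\to(\P^{j-1})^n$ along which to apply a projective bundle formula (the inclusion goes the other way), and a subcomplex inclusion does not in general induce a surjection on cohomology (think of $S^1\subset D^2$), so you would need to invoke the fact that all cells here are even-dimensional, or better, just note that in each fixed degree the system $\{H^i(X^j,\Z)\}_j$ is eventually constant by the connectivity of the good pairs, which gives Mittag--Leffler for free. With that correction your route is valid; it is simply heavier than the paper's, whose entire proof is the degreewise stabilization plus the cellularity of $X^j$.
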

\begin{proof}
Let  $\sU = \{(V_j, U_j)\}_{j \ge 1}$ be a sequence of good pairs for $T$-action
as above. Since $H^i_T(X, \Z) \xrightarrow{\cong} H^i(X^j, \Z)$ for $j \gg 0$, it
suffices to show that $H^*(X^j, \Z)$ is torsion-free for any $j \ge 0$.
But we have shown in Proposition~\ref{prop:filter-Equiv} that each $X^j$
is cellular and hence $H^*(X^j, \Z)$ is a free abelian group.
\end{proof}

\begin{thm}\label{thm:Alg-Top}
Let $k$ be any field of characteristic zero and let $X$ be a smooth $k$-scheme
with an action of a split torus $T$. Assume that $X$ is $T$-equivariantly
cellular. Then there is a degree-doubling map
\[
\Phi^{\rm top}_X : \Omega^*_T(X) \to MU^{*}_T(X)
\]
which is a ring isomorphism
\end{thm}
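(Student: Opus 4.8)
The plan is to reduce the statement over an arbitrary field $k$ of characteristic zero to the case $k = \C$, and then over $\C$ to compare both equivariant theories with their ``$\sU$-approximated'' versions, where we have already done the work. First I would handle the reduction to $\C$. By the Lefschetz principle, a smooth $k$-scheme $X$ with split torus action that is $T$-equivariantly cellular is defined over a finitely generated subfield of $k$, and one may embed such a field into $\C$; since algebraic cobordism and the equivariant construction of Section~\ref{section:EAC} commute with such base field extensions for cellular schemes (the cobordism ring is generated by the classes of the cells and the relations are governed by the formal group law, neither of which changes under extension of the base field), the map $\Phi^{\rm top}_X$ and the question of whether it is an isomorphism are insensitive to replacing $k$ by $\C$. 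So assume $k = \C$.

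Now over $\C$, the degree-doubling map $\Phi^{\rm top}_X$ is constructed by functoriality from the natural transformation $\Omega^* \to MU^{2*}$ on smooth $\C$-schemes (both being oriented cohomology theories, with $MU^{2*}$ receiving the canonical map from the universal one) applied levelwise to the mixed spaces $X^j = X \times^T U_j$ and then passing to the inverse limit; this uses Theorem~\ref{thm:NO-Niveu} to identify $\Omega^*_T(X)$ with $\varprojlim_j \Omega^*(X^j) = \Omega^*_T(X,\sU)$. The key diagram is
\[
\begin{CD}
\Omega^*_T(X) @>{\cong}>> \Omega^*_T(X,\sU) \\
@V{\Phi^{\rm top}_X}VV @VVV \\
MU^{2*}_T(X) @>>> MU^{2*}_T(X,\sU)
\end{CD}
\]
The left vertical arrow followed by the bottom arrow equals the right vertical arrow precomposed with the top, by construction. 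The top arrow is an isomorphism by Theorem~\ref{thm:NO-Niveu}. The right vertical arrow is an isomorphism by Proposition~\ref{prop:filter-Equiv}, whose hypotheses hold precisely because $X$ is $T$-equivariantly cellular. Finally the bottom arrow is an isomorphism by Lemma~\ref{lem:Cob-lim}, whose torsion-freeness hypothesis on $H^*_T(X,\Z)$ is supplied by Lemma~\ref{lem:Tor-free}. Chasing the square then forces $\Phi^{\rm top}_X$ to be an isomorphism of groups, and it is a ring map because each levelwise map $\Omega^*(X^j) \to MU^{2*}(X^j)$ is multiplicative and inverse limits of rings are rings.

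The main obstacle is not any single hard estimate but making sure the reduction to $\C$ is clean: one must check that the whole package---the good pairs $\sU$, the mixed quotients $X \times^T U_j$, their cellular structure, and the coniveau/niveau filtrations entering Definition~\ref{defn:ECob}---descends compatibly under extension of scalars for cellular schemes, so that the isomorphism obtained over $\C$ genuinely transports back to $k$. Once that bookkeeping is in place, the rest is the diagram chase above together with the already-established Proposition~\ref{prop:filter-Equiv}, Lemma~\ref{lem:Cob-lim}, and Lemma~\ref{lem:Tor-free}. A minor point to verify is that the map $\Phi^{\rm top}_X$ as defined in the body of the paper (before this theorem) literally agrees with the limit of the levelwise natural transformations, so that the square above commutes on the nose rather than merely up to the identifications; this is immediate from the definitions but should be spelled out.
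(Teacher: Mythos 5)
Your proof follows the paper's argument essentially verbatim: over $\C$ it combines Theorem~\ref{thm:NO-Niveu}, Lemma~\ref{lem:Cob-lim} (fed by Lemma~\ref{lem:Tor-free}) and Proposition~\ref{prop:filter-Equiv} in the same diagram chase, with the ring map $\Phi^{\rm top}_X$ coming from the levelwise comparison (the paper invokes \cite[Proposition~7.4]{Krishna1} for its construction). The only real difference is the reduction to $k=\C$, which the paper handles not by spreading out over a finitely generated subfield but by citing \cite[Theorem~4.7]{Krishna2}, namely that for a $T$-equivariantly cellular scheme $\Omega^*_T(X)\cong S^{\oplus r}\cong \Omega^*_T(X_{\C})$ with $r$ the number of cells --- the precise structural statement underlying your informal ``cells plus formal group law'' base-change claim.
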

\begin{proof}
If we fix a complex embedding $k \to \C$, then it follows from
our assumption and \cite[Theorem~4.7]{Krishna2} that
$\Omega^*_T(X) \cong S^{\oplus r} \cong \Omega^*_T(X_{\C})$, where $r$ is
the number of cells in $X$. Hence we can assume that our ground field is $\C$.

It follows from Lemma~\ref{lem:Tor-free} that
$H^*_T(X, \Z) = H^*\left(X \times^T E_G, \Z\right)$ is
torsion-free. We conclude from \cite[Proposition~7.4]{Krishna1} that there is
a ring homomorphism $\Phi^{\rm top}_X : \Omega^*_T(X) \to MU^{*}_T(X)$.

We now choose a sequence $\{(V_j, U_j)\}_{j \ge 1}$ of good pairs for the
$T$-action as in Proposition~\ref{prop:filter-Equiv}. It follows from
\cite[Theorem~6.1]{Krishna1} that for each $i \in \Z$,
$\Omega^i_T(X) \xrightarrow{\cong} \Omega_T^i\left(X, \sU\right)$,
and Lemma~\ref{lem:Cob-lim} implies that
$MU^{i}_T(X) \xrightarrow{\cong} MU^{i}_T\left(X, \sU\right)$. The theorem now
follows from Proposition~\ref{prop:filter-Equiv}.
\end{proof}

\begin{cor}\label{cor:flagT}
Let $G$ be a connected reductive group over $k$ and let $B$ be a Borel subgroup
containing a split maximal torus $T$. Then there is a ring isomorphism
\[
\Phi^{\rm top}_{G/B}: \Omega^*_T(G/B) \xrightarrow{\cong} MU^{*}_T(G/B).
\]
\end{cor}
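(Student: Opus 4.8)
The plan is to deduce this corollary directly from Theorem~\ref{thm:Alg-Top} by verifying its single hypothesis: that $G/B$ is $T$-equivariantly cellular in the sense of Definition~\ref{defn:T-CELL}. Once that is in hand, the theorem supplies a degree-doubling ring isomorphism $\Phi^{\rm top}_{G/B}\colon \Omega^*_T(G/B) \xrightarrow{\cong} MU^{*}_T(G/B)$, which is exactly the assertion.

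So the work reduces to checking the hypothesis. First I would note that $G/B$ is a smooth projective variety (over $k$, hence after a complex embedding over $\C$), and that it carries a $T$-action with isolated fixed points: the fixed locus $(G/B)^T$ is the finite set of Weyl-group translates $\{wB/B : w \in W\}$, since $T$ acts on $G/B$ through left translation and a coset $gB$ is fixed iff $g$ normalizes $B$ up to $B$, which pins $g$ down to $N_G(T)$ modulo $T$. Then I would invoke the Bialynicki-Birula decomposition, in the form cited in the excerpt just before Definition~\ref{defn:T-CELL} (B.~Bialynicki-Birula \cite{BB}, together with Hesselink \cite{Hessel} for the non-algebraically-closed case): a smooth projective variety with a $T$-action and isolated fixed locus is $T$-equivariantly cellular, the cells being the attracting sets of the fixed points with respect to a generic one-parameter subgroup, each isomorphic as a $T$-variety to a linear representation $k^{r_w}$. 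This is precisely the statement already recorded in the sentence following Definition~\ref{defn:T-CELL} ("In particular, a complete flag variety $G/B$ \ldots is $T$-equivariantly cellular"), so strictly speaking the corollary is immediate once that remark is made explicit.

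Concretely: apply Theorem~\ref{thm:Alg-Top} with $X = G/B$. Its conclusion gives the degree-doubling ring isomorphism $\Phi^{\rm top}_{G/B}$, and naming it as in the corollary finishes the proof. I would also remark, for bookkeeping, that the number of cells here equals $|W|$, matching the Bruhat decomposition mentioned before Lemma~\ref{lem:CELL}; this is not needed for the statement but clarifies the structure of $\Omega^*_T(G/B)$ as a free $S(T)$-module of rank $|W|$, which is used downstream.

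I do not expect any genuine obstacle: the only nontrivial input, the Bialynicki-Birula cellularity of $G/B$, is a classical fact and is already quoted in the preceding paragraph of the paper. The one point to be slightly careful about is that Theorem~\ref{thm:Alg-Top} is stated over an arbitrary characteristic-zero field with a split torus, so no complex embedding needs to be chosen at the level of the corollary's statement — the passage to $\C$ happens inside the proof of the theorem, not here. Thus the proof is a one-line citation of Theorem~\ref{thm:Alg-Top} applied to the $T$-equivariantly cellular variety $G/B$.
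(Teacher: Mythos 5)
Your proof is correct and follows the paper's own argument exactly: the paper's proof of Corollary~\ref{cor:flagT} likewise just notes that $G/B$ is $T$-equivariantly cellular (via the Bialynicki-Birula--Hesselink decomposition, as recorded after Definition~\ref{defn:T-CELL}) and then applies Theorem~\ref{thm:Alg-Top}. Your extra verification that the $T$-fixed points are the Weyl translates $wB/B$, and the remark on the rank-$|W|$ module structure, are fine supplementary details but not a different route.
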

\begin{proof} We have already commented above that $G/B$ is
$T$-equivariantly cellular. We now apply Theorem~\ref{thm:Alg-Top}.
\end{proof}

\section{Equivariant cobordism of $G/B$}\label{section:MAINGB}
 For the rest of the paper, $G$ denotes a split connected reductive group over $k$. We fix a split maximal torus $T$ of rank $n$ in $G$ and a Borel subgroup $B$ containing $T$. The Weyl group of $G$ is denoted by $W$.
In this section, we compute the equivariant cobordism ring $\Omega_T^*(G/B)$ of the complete flag variety $G/B$.

As we explained in the beginning of this text, to describe the $T$-equivariant
cobordism ring of the complete flag $G/B$, we do this first for the
complex cobordism and then use Corollary \ref{cor:flagT} to prove the analogous result in the algebraic
set-up. For the description of the equivariant complex cobordism, we need the
following special case of the Leray-Hirsch theorem
for a multiplicative generalized cohomology theory.

\begin{thm}[Leray-Hirsch]\label{thm:Top-LHT}
Let $X$ be a (possibly infinite) CW-complex with finite skeleta and
let $F \xrightarrow{i} E \xrightarrow{p} X$ be a fibration
such that the fiber $F$ is a finite CW-complex. Assume that
there are elements $\{e_1, \cdots , e_r\}$ in $MU^*(E)$ such that
$\{f_1 = i^*(e_1), \cdots , f_r = i^*(e_r)\}$ forms an $\bL$-basis of
$MU^*(F)$ for each fiber $F$ of the fibration. Assume furthermore that
$H^*(X, \Z)$ is torsion-free. Then the map
\begin{equation}\label{eqn:Top-LHT**1}
\Psi : MU^*(F) \otimes_{\bL} MU^*(X) \to MU^*(E)
\end{equation}
\[
\Psi\left({\underset{1 \le i \le r}\sum} \ f_i \otimes b_i\right)
= {\underset{1 \le i \le r}\sum} p^*(b_i) e_i
\]
is an isomorphism of $MU^*(X)$-modules. In particular, ${MU^*(E)}$
is a free ${MU^*(X)}$-module with the basis $\{e_1, \cdots , e_r\}$.
\end{thm}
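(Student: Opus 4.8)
The plan is to follow the classical proof of the Leray--Hirsch theorem, adapting it to a multiplicative generalized cohomology theory by using the Atiyah--Hirzebruch spectral sequence (AHSS) together with the torsion-freeness hypothesis. First I would set up the map $\Psi$ precisely and check it is a well-defined homomorphism of $MU^*(X)$-modules: this uses only the projection formula and the module structure, so it is routine. The substance is to prove $\Psi$ is an isomorphism.

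The main step is a comparison of spectral sequences. For the fibration $F \to E \xrightarrow{p} X$ one has the AHSS converging to $MU^*(E)$ with $E_2$-page $H^*(X; MU^*(F))$ — or, more to the point, one uses the fact that since $\{f_i\}$ forms an $\bL$-basis of $MU^*(F)$ on each fiber and $X$ has finite skeleta, the classes $e_i \in MU^*(E)$ trivialize the relevant local coefficient system, so the Serre/Atiyah--Hirzebruch spectral sequence for $E$ has $E_2$-term $H^*(X;\Z) \otimes_{\Z} MU^*(F)$. Meanwhile the left-hand side $MU^*(F) \otimes_{\bL} MU^*(X)$ carries the AHSS of $X$ tensored up, whose $E_2$-page is $H^*(X;\Z) \otimes_{\Z} MU^*(F)$ as well (here torsion-freeness of $H^*(X;\Z)$ is what makes the universal-coefficient and Künneth identifications clean, with no $\Tor$ terms). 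The map $\Psi$ is compatible with the skeletal filtrations and induces on $E_2$-pages the obvious isomorphism $H^*(X;\Z)\otimes MU^*(F) \xrightarrow{\cong} H^*(X;\Z)\otimes MU^*(F)$ determined by $e_i \mapsto f_i$. Hence $\Psi$ is a filtered map inducing an isomorphism on associated graded pieces.

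From there I would pass from the associated graded statement back to $\Psi$ itself. Because $X$ has finite skeleta, on each finite skeleton $X^{(N)}$ the filtrations are finite and an isomorphism on $E_\infty$ forces an isomorphism of the (finite-length filtered) groups $MU^*(F)\otimes_{\bL} MU^*(X^{(N)}) \to MU^*(p^{-1}X^{(N)})$; then one takes the inverse limit over $N$, invoking the $\varprojlim$/$\varprojlim^1$ Milnor sequence on both sides. The $\varprojlim^1$ terms vanish: on the $E$-side this is the argument already used in Lemma~\ref{lem:Cob-lim} (torsion-freeness of $H^*(E;\Z)$, which follows from that of $H^*(X;\Z)$ and the trivialization by the $e_i$, via \cite[Corollary~1]{Landweber}), and on the tensor side it reduces to the same statement for $X$. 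This yields that $\Psi$ is an isomorphism, and freeness of $MU^*(E)$ over $MU^*(X)$ is then immediate since $MU^*(F)$ is $\bL$-free on $\{f_1,\dots,f_r\}$ by hypothesis.

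The main obstacle I anticipate is the bookkeeping needed to identify the $E_2$-page of the spectral sequence for $E$ with $H^*(X;\Z)\otimes MU^*(F)$ and to see that $\Psi$ is filtration-preserving and induces the expected map on $E_2$ — i.e. genuinely checking that the chosen global classes $e_i$ split the AHSS edge map and kill the local-coefficient twisting. An alternative, perhaps cleaner, route avoids spectral sequences entirely: prove the statement first for $X$ a finite CW-complex by induction on the number of cells, using the Mayer--Vietoris/cofiber sequence for attaching a cell $X = X' \cup_{S^{m-1}} D^m$ together with the five lemma (the hypotheses restrict to $X'$ and to the cell, and the relative term reduces to $MU^*(F)$ smashed with a sphere), and then pass to the infinite skeleta by the $\varprojlim$ argument above. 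I would likely present the cellular-induction version, as it keeps everything within elementary manipulations of the cohomology theory and matches the ``elementary techniques'' spirit emphasized in the introduction, relegating torsion-freeness to its one essential use in killing $\varprojlim^1$.
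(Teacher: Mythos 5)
Your main argument---comparing the Atiyah--Hirzebruch spectral sequence of $X$ tensored with $MU^*(F)$ against the Serre-type spectral sequence of the fibration converging to $MU^*(E)$, with torsion-freeness of $H^*(X,\Z)$ used (via Landweber) to kill the ${\varprojlim}^1$ term and guarantee strong convergence---is essentially the paper's own proof, which sketches exactly this comparison and refers to Switzer and Kono--Tamaki for the details. The cellular-induction-plus-Milnor-sequence alternative you mention at the end is also a standard viable route, but the paper does not take it, and your primary plan already matches the published argument.
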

\begin{proof}
This result is well known and can be found, for example, in
\cite[Theorem~15.47]{Switzer} and \cite[Theorem~3.1]{KonoT}.
We give a sketch of the main steps and in particular, explain where one needs
the fact that $H^*(X, \Z)$ is torsion-free.

The assignment $X \mapsto MU^*(X)$ is a multiplicative generalized
cohomology by \cite[Theorem~3.28]{KonoT}. Since this cohomology theory is
given by a spectrum, it satisfies the additivity axiom ({\sl cf.}
\cite[Chapter~2, \S 3]{KonoT}) by \cite[Theorem~2.21]{Prastaro}.
Hence we have the Atiyah-Hirzebruch spectral sequence
\begin{equation}\label{eqn:AHss}
E_2 = H^*(X, MU^*) \Rightarrow MU^*(X).
\end{equation}
The assumption of freeness and finite rank of $MU^*(F)$ over the ring
$MU^*$ implies that tensoring with $MU^*(F)$ is an exact functor
on the category of $MU^*$-modules.
In particular, the above spectral sequence becomes
\begin{equation}\label{eqn:AHss*}
E_2 = H^*(X, MU^*) \otimes_{MU^*} MU^*(F) \Rightarrow MU^*(X)
\otimes_{MU^*} MU^*(F).
\end{equation}
On the other hand, we also have the Serre spectral sequence
\begin{equation}\label{eqn:Sss}
E'_2 = H^*(X, MU^*(F)) \cong H^*(X, MU^*)\otimes_{MU^*} MU^*(F)
\Rightarrow MU^*(E).
\end{equation}

Applying the first spectral sequence and using
the assumption of the Leray-Hirsch theorem, we obtain a morphism
of the spectral sequences $E_2 \to E'_2$ which is clearly an
isomorphism ({\sl cf.} \cite[Theorem~15.47]{Switzer}).
Taking the limit of the two spectral sequences, we get the
desired isomorphism, provided we know that the two spectral sequences
converge strongly to $MU^*(E)$. Since the two spectral sequences
are isomorphic, we need to show that the any of the two converges.

On the other hand, it follows from the torsion-freeness of
$H^*(X, \Z)$ and \cite[Corollary~1]{Landweber} that
${\underset{n}{\varprojlim}^1} \ H^*(X_n, \Z) = 0$. The required convergence
of the Atiyah-Hirzebruch spectral sequence now follows from
\cite[Theorem~2.1]{BoJ}.
\end{proof}

\subsection{Equivariant complex cobordism of $G/B$}In what follows, we assume all spaces to be pointed and let $p_X : X \to{\rm pt}$ denote the structure map.
Let $MU^*(B_T) = MU^*_T({\rm pt})$ denote the coefficient ring of the
$T$-equivariant complex cobordism.
It is well known
(\cite{Landweber1}) that $MU^*(B_T)$ is isomorphic
to the graded power series $S=\bL[[t_1, \cdots , t_n]]$, where $t_i$ is the first Chern class of a $T$-equivariant line bundle on $B_T$ corresponding to the $i$-th basis character $\chi_i$ of $T$ (see \cite[Example~6.4]{Krishna1} for more details).
Note that each character $\chi$ of $T$  also gives rise
to the $B$-equivariant line bundle $\sL_{\chi}:=G/B\times^B L_{\chi}$ on $G/B$.
We will also use that $MU^*(B_T)=MU^*(B_B)$ is isomorphic
to $MU^*_G(G/B)$ since $G/B \times^G E_G =E_G/B$ and we can choose $E_G=E_B$.

For any finite CW-complex $X$ with a $G$-action, let $i_X : G/B \to
X \times^B E_G \cong
(X \times^B E_G) \times^G G/B
\xrightarrow{\pi_X} X \times^G E_G$ be the inclusion of the fiber
at the base point. Let $i : G/B \to {E_G}/B  \xrightarrow{\pi} B_G$ denote the
inclusion of the fiber when $X$ is the base point.
This gives rise to the following commutative diagram:
\begin{equation}\label{eqn:BBG}
\xymatrix@C.7pc{
MU^*(B_G) \ar[r]^{\pi^*} \ar[d]_{p^*_{G,X}} & MU^*(B_T) \ar[d]^{p^*_{T,X}} \ar[r]^{i^*}&
MU^*(G/B) \ar@{=}[d] \\
MU^*_G(X) \ar[r]_{\pi^*_X} & MU^*_T(X) \ar[r]_{i^*_X} & MU^*(G/B).}
\end{equation}

Recall that the {\em torsion index} of $G$ is defined as the smallest positive integer
$t_G$ such that $t_G$ times the class of a point in $H^{2d}(G/B,\Z)$ (where $d=\dim(G/B)$) belongs to the subring of $H^*(G/B,\Z)$ generated by the first Chern classes of line bundles $\sL_{\chi}$ (e.g., $t_G=1$ for $G=GL_n$, see \cite{Totaro2} for computations of $t_G$ for other groups).
If $G$ is simply connected then this subring is generated by $H^2(G/B,\Z)$.

For the rest of this section, an abelian group $A$ will actually mean
its extension $A \otimes_{\Z} R$, where $R = \Z[t^{-1}_G]$. In particular,
all the cohomology and the cobordism groups will be considered with
coefficients in $R$.

We shall use the following key fact to prove the main result of this section.

\begin{lem} \label{lem.surjective}
The homomorphism $i^*:MU^*_G(G/B)\to MU^*(G/B)$ is surjective over the ring $R$.
\end{lem}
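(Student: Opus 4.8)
We want: the restriction map $i^*: MU^*_G(G/B) \to MU^*(G/B)$ is surjective after inverting $t_G$. The key structural fact is the identification $MU^*_G(G/B) \cong MU^*(B_T) = S = \bL[[t_1,\dots,t_n]]$ coming from $E_G/B = G/B \times^G E_G$ (with $E_G = E_B$), and that under this identification $i^*$ sends the class $t_\chi \in MU^*(B_T)$ (the first Chern class of the line bundle on $B_T$ attached to a character $\chi$) to $c_1^{MU}(\sL_\chi) \in MU^*(G/B)$, where $\sL_\chi = G/B \times^B L_\chi$. So the problem reduces to: the subring of $MU^*(G/B)$ generated over $\bL$ by the first Chern classes $c_1^{MU}(\sL_\chi)$, $\chi \in \widehat{T}$, is all of $MU^*(G/B)$ after inverting $t_G$.

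The plan is to compare with singular cohomology via the Conner--Floyd / Atiyah-Hirzebruch collapse for $G/B$. Since $G/B$ is cellular, $H^*(G/B,\Z)$ is torsion-free and concentrated in even degrees, so the Atiyah-Hirzebruch spectral sequence degenerates and $MU^*(G/B)$ is a free $\bL$-module whose rank in each degree equals $\dim_{\Z} H^*(G/B,\Z)$; concretely, a homogeneous $\bL$-basis of $MU^*(G/B)$ maps to a $\Z$-basis of $H^*(G/B,\Z)$ under the Thom projection $MU^* \to H^*(-,\Z)$ (reduction of coefficients $\bL \to \Z$). Now the definition of the torsion index says precisely that, after inverting $t_G$, the class of a point in $H^{2d}(G/B,R)$ lies in the subring generated by the $c_1^H(\sL_\chi)$; combined with the classical fact (Borel) that $H^*(G/B,\Q)$, and hence $H^*(G/B,R)$ after inverting $t_G$, is generated as an $R$-algebra by the $c_1^H(\sL_\chi)$ together with the point class — indeed over $R$ the subring $A$ generated by the $c_1^H(\sL_\chi)$ satisfies $A_{2d} \ni [\mathrm{pt}]$, and Poincaré duality then forces $A = H^*(G/B,R)$ — we get that the Chern subring surjects onto $H^*(G/B,R)$.

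The final step is a standard Nakayama-type lifting argument for the complete local (graded) ring $S = \bL[[t_1,\dots,t_n]]$, or more simply a filtration/degree argument: let $M = MU^*(G/B)$ and let $N \subseteq M$ be the $\bL$-subalgebra generated by the $c_1^{MU}(\sL_\chi) = i^*(t_\chi)$, i.e. $N = \mathrm{im}(i^*)$. We have just shown that the composite $N \hookrightarrow M \twoheadrightarrow H^*(G/B,R)$ is surjective. Since $M$ is a finitely generated free $\bL$-module and $\bL \to \Z$ ($= \bL/\bL^{<0}$, i.e. killing the augmentation ideal $I = \bL^{<0}$) has kernel $I$ with $M/IM \cong H^*(G/B,R)$, surjectivity of $N \to M/IM$ plus the fact that $M$ is finitely generated over $\bL$ would normally give $N + IM = M$ and then $N = M$ by Nakayama — but $\bL$ is not local, so instead we argue by induction on cobordism degree: in each degree $M^i$ is a finite $\Z$-module (finitely many monomials in the $\bL$-generators of bounded degree), and one shows $N^i = M^i$ by downward recursion using that $I\cdot M$ in degree $i$ only involves $M^j$ with $j < i$. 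I expect this last bookkeeping — making the degreewise induction over the graded-complete ring clean — to be the only real obstacle; everything else is a direct translation of the definition of $t_G$ together with the AHS degeneration for cellular varieties.
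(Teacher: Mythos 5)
Your overall route is the same as the paper's: identify the image of $i^*$ with the subring of $MU^*(G/B)$ generated by the classes $c_1(\sL_\chi)$, use cellularity to get that $MU^*(G/B)$ is a free $\bL$-module with $MU^*(G/B)\otimes_{\bL}R\cong H^*(G/B,R)$, invoke the torsion index to get generation by Chern classes in cohomology over $R$, and then lift this generation statement from $H^*$ to $MU^*$. The one genuinely flawed step is your justification of the cohomological input: you claim that since the subring $A\subset H^*(G/B,R)$ generated by the $c_1^H(\sL_\chi)$ contains the point class and is rationally everything, ``Poincar\'e duality forces $A=H^*(G/B,R)$.'' That implication is false as a general principle: in $H^*(\P^2,\Z)\cong\Z[h]/(h^3)$ the graded subring generated by $2h$ and $h^2$ contains the point class and is rationally the whole ring, yet it misses $h$. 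What is actually needed is the classical theorem of Demazure (the paper's reference \cite{Dem1}) that the characteristic map $\Sym(\wh T)\otimes\Z[t_G^{-1}]\to H^*(G/B,\Z[t_G^{-1}])$ is surjective; its proof uses the divided difference operators (apply them to a class $u$ with $c(u)=t_G[\mathrm{pt}]$ to produce $t_G$ times every Schubert class inside the characteristic subring), not duality alone. The paper simply invokes this fact in one line (``$H^*(G/B,R)$ is generated by the first Chern classes by definition of the torsion index''), so you may cite it as well, but your proposed proof of it does not work as stated.

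On the lifting step, which you flag as the main obstacle, your plan is fine and is a legitimate alternative to the paper's argument, modulo one bookkeeping slip: with cohomological grading and $I=\bL^{<0}$, the piece of $I\cdot M$ in degree $i$ involves $M^j$ with $j>i$, not $j<i$, so the induction runs \emph{downward} from the top (base case $M^i=0$ for $i>2d$); since $N=\mathrm{im}(i^*)$ is an $\bL$-submodule and $N^i+(I\cdot M)^i=M^i$ by surjectivity onto $M/IM\cong H^*(G/B,R)$, the inductive hypothesis $N^j=M^j$ for $j>i$ gives $(I\cdot M)^i\subset N^i$ and hence $N^i=M^i$. The paper instead makes the lift explicit: it picks homogeneous polynomials $\varrho_w\in R[t_1,\dots,t_n]$ whose images in $H^*(G/B,R)$ agree with those of a homogeneous $\bL$-basis $\{e_w\}$, and checks by a degree argument that the transition matrix between $\{e_w\}$ and $\{i^*(\varrho_w)\}$ is unipotent upper-triangular, hence invertible, so the $i^*(\varrho_w)$ form an $\bL$-basis. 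Either lifting mechanism is acceptable; the genuine gap in your write-up is the Poincar\'e duality claim, which should be replaced by a citation of (or a divided-difference proof of) Demazure's surjectivity theorem.
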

\begin{proof} Since $MU^*_G(G/B)\simeq MU^*(B_T)\simeq S$, the image of $i^*$ is the subring of $MU^*(G/B)$ generated by the
first Chern classes of line bundles $\sL_{\chi}$.
To prove surjectivity of $i^*$ we have to show that $MU^*(G/B)$ is generated by the first
Chern classes.

Since $G/B$ is cellular the cobordism ring $MU^*(G/B)$ is a free
$\bL$-module. Choose a basis $\{e_w\}_{w\in W}$ in $MU^*(G/B)$ such that all $e_w$ are homogeneous
(e.g., take resolutions of the closures of cells).
Consider the homomorphism
$$\varphi:MU^*(G/B) \to MU^*(G/B)\otimes_{\bL}R.$$
Since $H^*(G/B,R)$ is torsion free, we have the isomorphism
$MU^*(G/B)\otimes_{\bL}R\simeq H^*(G/B, R)$.
Note that $H^*(G/B, R)$ is generated by the first Chern classes by definition of the torsion index, and the
homomorphism $\varphi$ takes the Chern classes to the Chern classes.
Hence, there exist homogeneous polynomials $\{\varrho_w\}_{w\in W}$, where
$\varrho_w\in R[t_1,\ldots,t_n]\subset S$ such that
$\varphi(e_w)=\varphi(i^*(\varrho_w)).$
Then the set of cobordism classes
$\{i^*(\varrho_w)\}_{w\in W}$ is a basis over $\bL$ in
$MU^*(G/B, R)$. Indeed, consider the transition matrix $A$ from the basis
$\{e_w\}_{w\in W}$ to this set (order $e_w$ and $\varrho_w$ so that their degrees
decrease).  The elements of $A$ are homogeneous elements of $\bL$ and
$A\otimes_{\bL}R$ is the identity matrix. By degree arguments, it follows that
the matrix $A$ is upper-triangular  and the diagonal elements are equal to
$1$, so $A$ is invertible.

Hence, $MU^*(G/B)$ has a basis consisting of polynomials in the first Chern
classes and the homomorphism $i^*$ is surjective over $R$.
\end{proof}

By Lemma \ref{lem.surjective}, we can choose polynomials $\{\varrho_w\}_{w\in W}$ in $MU^*_G(G/B) = S=\bL[[t_1,\ldots,t_n]]\simeq MU^*(B_T)$ such that
$\{i^*(\varrho_w)\}_{w\in W}$ form an $\bL$-basis in $MU^*(G/B)$. Set $\varrho_{w, X} =
p^*_{T,X}\left(\varrho_w\right)$ for each $w \in W$.
Define $\bL$-linear maps
\begin{equation}\label{eqn:BBG1}
s : MU^*(G/B) \to S, \
s_{X} : MU^*(G/B) \to MU^*_T(X)
\end{equation}
\[
s\left(i^*\left(\varrho_w\right)\right) = \varrho_w \ {\rm and} \
s_X\left(i^*\left(\varrho_w\right)\right) = \varrho_{w, X}.
\]
Note that maps $i_X$ and $i$ are $W$-equivariant. In particular,
the map $s$ is also $W$-equivariant.

\begin{lem}\label{lem:BBG-top}
Let $X$ be a finite CW-complex with a $G$-action such that $H^*_T(X, R)$ is
torsion-free. \\
$(i)$ The map $MU^*(G/B) \otimes_{\bL} MU^*_G(X) \to MU^*_T(X)$ which sends
$(i,x)$ to $s_X(b) \cdot \pi^*_X(x)$ is an isomorphism of
$MU^*_G(X)$-modules. In particular, $MU^*_T(X)$ is a free $MU^*_G(X)$-module
with the basis $\{\varrho_{w, X}\}_{w \in W}$. \\
$(ii)$ The map $S \times MU^*_G(X) \to MU^*_T(X)$ which sends
$(a, x)$ to $p^*_{T,X}(a) \cdot \pi^*_X(x)$ yields an isomorphism of
graded $\bL$-algebras
\begin{equation}\label{eqn:BBG-top1}
\Psi^{\rm top}_X : S \otimes_{MU^*(B_G)} MU^*_G(X) \xrightarrow{\cong} MU^*_T(X).
\end{equation}
\end{lem}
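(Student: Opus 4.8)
The plan is to derive both assertions from the Leray--Hirsch theorem (Theorem~\ref{thm:Top-LHT}), applied to the fibration
\[
G/B \xrightarrow{\ i_X\ } X \times^B E_G \xrightarrow{\ \pi_X\ } X \times^G E_G
\]
introduced just before diagram~\eqref{eqn:BBG}: its fibre $G/B$ is a finite CW-complex, and its base $X \times^G E_G$ is a CW-complex with finite skeleta, being exhausted by the finite complexes $X \times^G U_j$ (cf.\ the filtration used in the proof of Lemma~\ref{lem:Cob-lim}). As the required classes on the total space $MU^*(X \times^B E_G) = MU^*_T(X)$ I would take $e_w := \varrho_{w,X} = p^*_{T,X}(\varrho_w)$. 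The commutativity of the right-hand square of~\eqref{eqn:BBG} gives $i^*_X(\varrho_{w,X}) = i^*(\varrho_w)$, and $\{i^*(\varrho_w)\}_{w \in W}$ is by construction an $\bL$-basis of $MU^*(G/B)$. Hence the only hypothesis of Theorem~\ref{thm:Top-LHT} that is not immediate is that $H^*(X \times^G E_G, R) = H^*_G(X, R)$ be torsion-free; granting this, Theorem~\ref{thm:Top-LHT} produces precisely the isomorphism and the basis $\{\varrho_{w,X}\}_{w \in W}$ asserted in $(i)$.

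To supply the missing torsion-freeness I would first prove a bootstrap: for any finite CW-complex $Y$ with a $G$-action such that $H^*_T(Y, R)$ is torsion-free, $H^*_G(Y, R)$ is also torsion-free. Here one applies the \emph{ordinary} Leray--Hirsch theorem (which imposes no condition on the base) to the fibration $G/B \to Y \times^B E_G \to Y \times^G E_G$, using as fibre classes the images in $H^*_T(Y, R)$ of the reductions of the $\varrho_w$ under $MU^*(B_T) \to H^*(B_T, R)$; by naturality these restrict on the fibre to the reductions of the $i^*(\varrho_w)$, which form an $R$-basis of $H^*(G/B, R)$ because $MU^*(G/B) \otimes_\bL R \cong H^*(G/B, R)$ (as in the proof of Lemma~\ref{lem.surjective}). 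Leray--Hirsch then identifies $H^*_T(Y, R)$ with $H^*_G(Y, R) \otimes_R H^*(G/B, R)$ as $H^*_G(Y,R)$-modules, and since $H^0(G/B, R) = R$ is an $R$-direct summand of $H^*(G/B, R)$, the submodule $\pi^*_Y(H^*_G(Y,R))$ is an $R$-direct summand of the torsion-free module $H^*_T(Y, R)$; hence $H^*_G(Y, R)$ is torsion-free. Taking $Y = X$ finishes $(i)$; taking $Y = {\rm pt}$, for which $H^*_T({\rm pt}, R) = H^*(B_T, R) = R[t_1, \ldots, t_n]$ is obviously torsion-free, shows in passing that $H^*(B_G, R)$ is torsion-free.

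For $(ii)$ I would first feed $X = {\rm pt}$ into $(i)$: since $MU^*(G/B)$ is $\bL$-free on $\{i^*(\varrho_w)\}$, the resulting isomorphism $MU^*(G/B) \otimes_\bL MU^*(B_G) \xrightarrow{\cong} S$ exhibits $S = MU^*(B_T)$ as a free $MU^*(B_G)$-module with basis $\{\varrho_w\}_{w \in W}$, so that $S \otimes_{MU^*(B_G)} MU^*_G(X)$ is a free $MU^*_G(X)$-module on $\{\varrho_w \otimes 1\}_{w \in W}$. The commutativity of the left-hand square of~\eqref{eqn:BBG} shows that $p^*_{T,X} : S \to MU^*_T(X)$ and $\pi^*_X : MU^*_G(X) \to MU^*_T(X)$ are homomorphisms of graded $\bL$-algebras agreeing on $MU^*(B_G)$, so by the universal property of the tensor product of commutative $\bL$-algebras they induce a graded $\bL$-algebra homomorphism $\Psi^{\rm top}_X : S \otimes_{MU^*(B_G)} MU^*_G(X) \to MU^*_T(X)$ with $a \otimes x \mapsto p^*_{T,X}(a) \cdot \pi^*_X(x)$. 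This $\Psi^{\rm top}_X$ sends the $MU^*_G(X)$-basis $\{\varrho_w \otimes 1\}$ onto $\{\varrho_{w,X}\}_{w \in W}$, which by $(i)$ is an $MU^*_G(X)$-basis of $MU^*_T(X)$; therefore $\Psi^{\rm top}_X$ is an isomorphism.

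The step I expect to be delicate is the torsion-freeness bookkeeping of the second paragraph: the input that is convenient to check in the applications is torsion-freeness of $H^*_T(X, R)$, whereas Theorem~\ref{thm:Top-LHT} requires it for the base $H^*_G(X, R)$, and bridging this gap forces one to run the classical (cohomological) Leray--Hirsch theorem first, leaning on the fact from Lemma~\ref{lem.surjective} that the classes $\varrho_w$ specialise to a basis of $H^*(G/B, R)$. Everything past that point is formal.
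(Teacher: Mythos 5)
Your proof is correct and follows the same route as the paper: apply Theorem~\ref{thm:Top-LHT} to the fibration $G/B \xrightarrow{i_X} X\times^B E_G \xrightarrow{\pi_X} X\times^G E_G$ with the classes $\varrho_{w,X}$, whose fibre restrictions are the $\bL$-basis $\{i^*(\varrho_w)\}$, and then obtain (ii) by noting that $\Psi^{\rm top}_X$ carries the $MU^*_G(X)$-basis $\{\varrho_w\otimes 1\}$ onto the basis $\{\varrho_{w,X}\}$. The only divergence is the step you flagged: the paper deduces torsion-freeness of $H^*_G(X,R)$ from that of $H^*_T(X,R)$ by citing \cite[Proposition~2.1(i)]{HS}, whereas you derive it directly by running the classical cohomological Leray--Hirsch theorem with the reductions of the $\varrho_w$ (which form an $R$-basis of $H^*(G/B,R)$ by the construction in Lemma~\ref{lem.surjective}) and observing that $H^*_G(X,R)$ then embeds as a direct summand of the torsion-free module $H^*_T(X,R)$; this substitute argument is sound over $R$ and simply makes self-contained a step the paper outsources to a citation.
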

\begin{proof}
We first observe that we can use Lemma~\ref{lem:Cob-lim} to see that
$MU^*_G(X)$ and $MU^*_T(X)$ are $\bL$-algebras. Moreover, it follows from
our assumption and \cite[Proposition~2.1(i)]{HS} that $H^*_G(X, R)$ is
torsion-free.
Since $i^* = i^*_X \circ p^*_{T,X}$, we conclude from the above construction
that $i^*(\varrho_w) =  i^*_X \left(p^*_{T,X}(\varrho_w)\right) =
i^*_X\left(\varrho_{w, X}\right)$. Since
$\{i^*(\varrho_w)\}_{w\in W}$ form an $\bL$-basis of $MU^*(G/B)$
the first statement
now follows immediately by applying Theorem~\ref{thm:Top-LHT} to the
fiber bundle
$G/B \xrightarrow{i_X} X \times^B{E_G} \xrightarrow{\pi_X}
X \times^G E_G$. We have just observed that
$H^*(X\times^G E_G, R)$ is torsion-free.

To prove the second statement, we first notice that the map in
~\eqref{eqn:BBG-top1} is a morphism of $\bL$-algebras.
Moreover, it follows from the first part of the lemma that
$S \cong MU^*(B_T)$ is a free $MU^*(B_G)$-module with basis
$\{\varrho_w\}_{w \in W}$ and
$MU^*_T(X)$ is a free $MU^*_G(X)$-module with basis $\{\varrho_{w,X}\}_{w \in W}$.
In particular, $\Psi^{\rm top}_X$ takes the basis elements
$\varrho_w \otimes 1$ onto
the basis elements $\varrho_{w,X}$. Hence, it is an algebra isomorphism.
\end{proof}

We now compute $MU^*(B_G)$.

\begin{prop}\label{prop:Inv-top}
The natural map $MU^*(B_G) \to \left(MU^*(B_T)\right)^W$ is an
isomorphism of $R$-algebras.
\end{prop}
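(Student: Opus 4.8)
The plan is to apply Lemma~\ref{lem:BBG-top}(ii) with $X = {\rm pt}$ and extract the invariants from the resulting free-module structure. Taking $X$ to be a point, which trivially satisfies the torsion-freeness hypothesis, the map $\pi^* : MU^*(B_G) \to MU^*(B_T) = S$ makes $S$ a free $MU^*(B_G)$-module with basis $\{\varrho_w\}_{w \in W}$, and by construction $i^*(\varrho_w)$ runs over an $\bL$-basis of $MU^*(G/B)$. The Weyl group $W$ acts on $S = MU^*(B_T)$ through its action on $T$ (equivalently, on the characters $\chi_i$), and since $\pi : E_G/B \to B_G$ is, up to homotopy, the $G/B$-bundle associated to the universal $G$-bundle, the subring $\pi^*(MU^*(B_G))$ lands inside the invariants $S^W$. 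So the content is that this inclusion $MU^*(B_G) \hookrightarrow S^W$ is surjective (and injective).

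First I would record injectivity: $\pi^*$ is split injective because $S$ is a \emph{free} $MU^*(B_G)$-module of rank $|W|$, with $\varrho_{e}$ (for $e \in W$ the identity, chosen so that $i^*(\varrho_e)$ is the unit of $MU^*(G/B)$) a basis element on which one can build a retraction; concretely $MU^*(B_G) \to S \xrightarrow{\text{coeff. of } \varrho_e}$ splits $\pi^*$. For surjectivity onto $S^W$, the key is a rank count after base change to $H^*$. Tensoring the free-module decomposition $S \cong \bigoplus_{w} MU^*(B_G)\cdot \varrho_w$ with $R = \Z[t_G^{-1}]$ and passing to singular cohomology — using that $H^*(B_T, R)$ is torsion-free and that for the reductive group $G$ over $R$ the map $H^*(B_G, R) \to H^*(B_T, R)^W$ is an isomorphism (this is the classical Borel computation, valid after inverting the torsion index $t_G$; see \cite{Totaro2}) — I would deduce that $MU^*(B_G)$ has the "correct" size. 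Then a graded Nakayama / degreewise-dimension argument, exactly parallel to the transition-matrix argument in the proof of Lemma~\ref{lem.surjective}, upgrades the cohomological isomorphism to the cobordism statement: an element of $S^W$ can be written in the $\varrho_w$-basis with coefficients in $MU^*(B_G)$, and $W$-invariance forces those coefficients to lie in the image of $\pi^*$ because $S^W \otimes_{\bL} H^*({\rm pt}, R) = H^*(B_T,R)^W = H^*(B_G,R)$ already has the expected Poincaré series and the transition matrix between $\{\varrho_w\}$ and any homogeneous lift of an $H^*(G/B,R)$-basis is upper-triangular with $1$'s on the diagonal.

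The main obstacle I anticipate is making the "$\pi^*(MU^*(B_G)) \subseteq S^W$" step and the surjectivity step fully rigorous without circularity: one must know the $W$-action is compatible with the fibration structure (so that $i$ and hence $s$ are $W$-equivariant, as already noted before Lemma~\ref{lem:BBG-top}), and one must invoke the classical fact that $H^*(B_G,R) \xrightarrow{\cong} H^*(B_T,R)^W$ after inverting $t_G$ as the \emph{input} from topology — the whole point being that cobordism is then forced to follow by the freeness in Lemma~\ref{lem:BBG-top}(ii) together with the degree bookkeeping. Everything else is the same homogeneous-transition-matrix manipulation used in Lemma~\ref{lem.surjective}, so I expect the write-up to be short.
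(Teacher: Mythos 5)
Your setup, the split injectivity, and the choice of inputs (Leray--Hirsch freeness of $MU^*(B_T)$ over $MU^*(B_G)$ with basis $\{\varrho_w\}$, plus Totaro's theorem that $H^*(B_G,R)\to H^*(B_T,R)^W$ is an isomorphism) are all in the right spirit, and the injectivity splitting is fine once you normalize $\varrho_{w_0}=1$ in $S$ itself (merely requiring $i^*(\varrho_e)=1$ is not enough for ``coefficient of $\varrho_e$'' to retract $\pi^*$). The genuine gap is in the surjectivity step. Your ``rank count after base change to $H^*$'' rests on two facts that are neither proved nor available a priori: first, that $MU^*(B_G)\otimes_{\bL}R\to H^*(B_G,R)$ is an isomorphism --- the Atiyah--Hirzebruch spectral sequence for $B_G$ is only known to degenerate rationally, not over $R$; second, that $S^W\otimes_{\bL}R\to \left(S\otimes_{\bL}R\right)^W=H^*(B_T,R)^W$ is an isomorphism --- taking $W$-invariants does not commute with the non-flat base change $\bL\to R$, and this commutation is essentially the content of the proposition rather than an input. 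Finally, the transition-matrix argument of Lemma~\ref{lem.surjective} does not transplant: there one works in a finitely generated free $\bL$-module with homogeneous generators in bounded degrees, so a homogeneous matrix reducing to the identity is forced to be unipotent; here $MU^*(B_G)$ and $S^W$ are not finitely generated over $\bL$, and, concretely, if you expand an invariant $x=\sum_w \pi^*(b_w)\varrho_w$, the vanishing of the images $\bar b_w\in H^*(B_G,R)$ for $w\neq w_0$ does not give $b_w=0$, because $MU^*(B_G)\to H^*(B_G,R)$ has a large kernel.

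What is missing is a mechanism to descend from the cohomological (or rational) statement back to cobordism over $R$, and this is exactly what the paper supplies. It first proves the statement with rational coefficients, where the Atiyah--Hirzebruch spectral sequence degenerates and Totaro's theorem applies. It then defines, via the projection formula for $p_{G/B}\colon G/B\to{\rm pt}$ and the class $\rho$ of a point, an integrally defined retraction $\beta=\alpha\otimes\id$ (with $\alpha(y)={p_{G/B}}_*(\rho\cdot y)$) which is the identity on the image of $\pi^*=f^*$. Given $x\in S^W$, the rational case shows that $x-f^*\beta(x)$ maps to zero in $S_{\Q}$, and since $S$ is torsion-free this forces $x=f^*\beta(x)$, hence $x\in{\rm Image}(\pi^*)$ over $R$. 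Some device of this kind --- an integral retraction onto the image together with torsion-freeness of $S$, or an equally explicit completion/Nakayama argument carried out with care in the graded power series ring --- is needed to close your outline; as written, the Poincar\'e-series and upper-triangularity remarks do not bridge the step from $H^*(B_T,R)^W$ back to $S^W$.
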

\begin{proof} 
Note that in the proof of Lemma \ref{lem.surjective}, we can choose 
$\varrho_{w_0}=1$ (here $w_0$ is the longest length element of the Weyl group).
Then applying Theorem~\ref{thm:Top-LHT}
to the fibration $G/B \xrightarrow{i} B_T \xrightarrow{\pi} B_G$ (as in the proof of
Lemma \ref{lem:BBG-top} for $X=pt$), we get
\begin{equation}\label{eqn:Inv-top*01}
\Psi(1 \otimes b) = \Psi(i^*(\varrho_{w_0}) \otimes b) = \pi^*(b)\varrho_{w_0}  = \pi^*(b) \
{\rm for \ any} \ b \in MU^*(B_G),
\end{equation}
where $\Psi$ is as in ~\eqref{eqn:Top-LHT**1}.
In particular, $\pi^*$ is the composite map
\begin{equation}\label{eqn:Inv-top*02}
\pi^* : MU^*(B_G) \xrightarrow{1 \otimes {id}} MU^*(G/B)\otimes_{\bL} MU^*(B_G)
\xrightarrow{\Psi} MU^*(B_T).
\end{equation}

Hence to prove the proposition, it suffices to
show using Theorem~\ref{thm:Top-LHT} that the map $1 \otimes {id}$
induces an isomorphism
$MU^*(B_G) \to \left(MU^*(G/B)\otimes_{\bL} MU^*(B_G)\right)^W$ over $R$.

We first show that the map
$MU^*(B_G) \xrightarrow{1 \otimes {id}} MU^*(G/B)\otimes_{\bL} MU^*(B_G)$
is split injective.
To do this, we only have to observe from the projection formula for the map
$p_{G/B} : G/B \to {\rm pt}$ that
${p_{G/B}}_* \left(\rho \cdot p^*_{G/B} (x)\right) =
{p_{G/B}}_*(\rho) \cdot x = x$, where $\rho \in MU^*(G/B)$
is the class of a point.
This gives a splitting of the map $p^*_{G/B}$ and hence a splitting of
$1 \otimes id=p^*_{G/B} \otimes id$.

To prove the surjectivity, we follow the proof of the analogous result for
the Chow groups in \cite[Theorem 1.3]{Totaro2}.
Since the Atiyah-Hirzebruch spectral sequence degenerates over the rationals
and since the analogue of our lemma is known for the singular cohomology
groups by \cite[Theorem~1.3(2)]{Totaro2}, we see that the proposition holds over the
rationals ({\sl cf.} \cite[Theorem~8.9]{Krishna1}).

We now let $\alpha : MU^*(G/B) \to \bL$ be the map $\alpha(y) =
{p_{G/B}}_*\left(\rho \cdot y\right)$ and set $\beta = \alpha \otimes id:
MU^*(G/B) \otimes_{\bL} MU^*(B_G) \to MU^*(B_G)$. Set
$f^* = p^*_{G/B} \otimes id$ and $f_* =  {p_{G/B}}_* \otimes id$.
The projection formula as above implies that $f^*\beta f^*(x) = f^*(x)$
for all  $x \in MU^*(B_G)$.  Thus $f^*\beta(y) = y$ for all
$y$ in the image of $1 \otimes id$. 
We identify $S \xrightarrow{\cong} MU^*(B_T)$ with
$MU^*(G/B) \otimes_{\bL} MU^*(B_G)$ over $R$ as in Lemma~\ref{lem:BBG-top} and
consider the commutative diagram
\begin{equation}\label{eqn:R-Q}
\xymatrix{
S \ar[d]_{g} \ar[r]^<<<<<<{\beta} & MU^*(B_G) \ar[d] 
\ar[r]^>>>>>{f^*} & S \ar[d]^{g} \\
S_{\Q} \ar[r]_<<<<<{\beta} & {MU^*(B_G)}_{\Q} \ar[r]_>>>>{f^*} & S_{\Q}}
\end{equation}
where $g:S \to S_{\Q}$ is the natural change of coefficients map.

Let us fix an element $x \in S^W$.
Since $g\left(S^W\right) \subseteq \left(S_{\Q}\right)^W$, it follows from our 
result over rationals that
\[
g\left(f^*\beta (x)\right) = f^*\beta \left(g(x)\right) = g(x).
\]
That is, $g\left(x - f^*\beta (x)\right) = 0$. Since $S$ is torsion-free,
we must have $x = f^*\beta (x)$ on the top row of ~\eqref{eqn:R-Q}. 
Since $x$ is an arbitrary element of
$S^W$, we conclude that $S^W \subseteq {\rm Image}(f^*)$ over $R$.

\end{proof}

\begin{remk}\label{remk:Alg-Winv}
We do not yet know if the map $S(G) \to S^W$ is
an isomorphism over $R$, although it is known to be true over the rationals
by \cite[Theorem~8.7]{Krishna1}.
\end{remk}

Combining Lemma~\ref{lem:BBG-top} and Proposition~\ref{prop:Inv-top},
we immediately get:

\begin{cor} \label{cor:iso} Let $X$ be a smooth $\C$-scheme with an action of $G$. Then
$$\Psi^{\rm top}_X : S\otimes_{S^W}MU^*_G(X)\xrightarrow{\simeq} MU^*_T(X).$$
In particular, $MU^*(G/B)$ is isomorphic to $S\otimes_{S^W}S$.
\end{cor}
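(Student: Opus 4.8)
The plan is to read the statement off as a direct substitution into the isomorphism already produced in Lemma~\ref{lem:BBG-top}. First I would recall from Lemma~\ref{lem:BBG-top}(ii) the isomorphism of graded $\bL$-algebras
\[
\Psi^{\rm top}_X : S \otimes_{MU^*(B_G)} MU^*_G(X) \xrightarrow{\cong} MU^*_T(X),
\qquad a \otimes x \longmapsto p^*_{T,X}(a)\cdot \pi^*_X(x),
\]
where the $MU^*(B_G)$-module structure on $S = MU^*(B_T)$ is via $\pi^* : MU^*(B_G) \to MU^*(B_T)$ and that on $MU^*_G(X)$ is via $p^*_{G,X}$; well-definedness on the tensor product is precisely the commutativity of the left-hand square of \eqref{eqn:BBG}, i.e. $p^*_{T,X}\circ\pi^* = \pi^*_X\circ p^*_{G,X}$. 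Then I would feed in Proposition~\ref{prop:Inv-top}, according to which $\pi^* : MU^*(B_G) \xrightarrow{\cong} (MU^*(B_T))^W = S^W$ is an isomorphism of $R$-algebras; under this identification the $MU^*(B_G)$-action on $S$ becomes the tautological $S^W$-module structure coming from the inclusion $S^W \subseteq S$. Substituting yields exactly
\[
\Psi^{\rm top}_X : S \otimes_{S^W} MU^*_G(X) \xrightarrow{\cong} MU^*_T(X).
\]

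For the last assertion I would specialize to $X = G/B$. As recorded just before Lemma~\ref{lem.surjective}, the homeomorphism $G/B \times^G E_G \cong E_G/B$ together with the choice $E_G = E_B$ gives $MU^*_G(G/B) \cong MU^*(B_B) \cong MU^*(B_T) = S$; plugging this into the isomorphism just obtained, the map $\Psi^{\rm top}_{G/B}$ reads $MU^*_T(G/B) \cong S \otimes_{S^W} S$, which is the remaining claim.

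Since both inputs are already established, there is no genuine obstacle here; the only point worth flagging is that Lemma~\ref{lem:BBG-top} carries the hypothesis that $H^*_T(X,R)$ be torsion-free, so for a general $X$ the first assertion is to be read under that hypothesis, while for $X = G/B$ it is automatic: $G/B$ is $T$-equivariantly cellular, hence $H^*_T(G/B, R)$ is torsion-free by Lemma~\ref{lem:Tor-free}. I would also make the routine bookkeeping remark that, as throughout this section, all cohomology and cobordism groups are taken with coefficients in $R = \Z[t_G^{-1}]$, so that Lemma~\ref{lem:BBG-top} and Proposition~\ref{prop:Inv-top} apply verbatim and the substitution is legitimate.
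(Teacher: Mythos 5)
Your proposal is correct and is essentially the paper's own argument: the paper derives Corollary~\ref{cor:iso} precisely by combining Lemma~\ref{lem:BBG-top}(ii) with Proposition~\ref{prop:Inv-top}, and uses $MU^*_G(G/B)\cong MU^*(B_T)=S$ for the last assertion, exactly as you do. Your added remark that the torsion-freeness hypothesis $H^*_T(X,R)$ of Lemma~\ref{lem:BBG-top} is implicitly needed in general, and is automatic for $X=G/B$ by $T$-equivariant cellularity and Lemma~\ref{lem:Tor-free}, is a point the paper leaves unstated, and it is a welcome clarification rather than a deviation.
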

This extends to cobordism a well-known result for cohomology (see e.g., \cite[Proposition 1(iii)]{Brion1}).

\subsection{Equivariant algebraic cobordism of $G/B$}\label{section:FIN1}
Using the natural map $r^G_T : \Omega^*_G(G/B) \to
\Omega^*_T(G/B)$ (\cite[Subsection 4.1]{Krishna1}) and the isomorphisms
(\cite[Propositions~5.5, 8.1]{Krishna1})
\[
S \cong \Omega^*_T(k) \cong \Omega^*_B(k) \cong \Omega^*_G(G/B),
\]
we get the {\sl characteristic} ring homomorphism
${\bf c}^{\rm eq}_{G/B} : S \to \Omega^*_T(G/B)$. We observe that since
${\bf c}^{\rm eq}_{G/B}$ is simply the change of group homomorphism, it is
the algebraic analogue of the restriction map $MU^*_G(G/B) \xrightarrow{\pi^*_X}
MU^*_T(G/B)$ in ~\eqref{eqn:BBG}. The structure map
$G/B \to {\rm {\rm Spec}}(k)$ gives the $\bL$-algebra map
$S \to \Omega^*_T(G/B)$, which is the algebraic analogue of the map
$p^*_{T, G/B}$ in ~\eqref{eqn:BBG}.

\begin{thm}\label{thm:ECF-MainT}
The natural map of $S$-algebras
\[
\Psi^{\rm alg}_{G/B} : S \otimes_{S^W} S \to \Omega^*_T(G/B)
\]
\[
\Psi^{\rm alg}_{G/B}( a \otimes b) = a \cdot {\bf c}^{\rm eq}_{G/B}(b)
\]
is an isomorphism over $R$.
\end{thm}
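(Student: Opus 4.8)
The plan is to reduce the algebraic statement to the complex-cobordism statement already proved in Corollary~\ref{cor:iso}, using the comparison isomorphism $\Phi^{\rm top}_{G/B}$ of Corollary~\ref{cor:flagT}. First I would observe that $\Phi^{\rm top}_{G/B} : \Omega^*_T(G/B) \xrightarrow{\cong} MU^{2*}_T(G/B)$ is a ring isomorphism, and that by \cite[Proposition~7.4]{Krishna1} (or directly from the construction of the comparison map) it is compatible with the $S$-algebra structures: precisely, the algebraic structure maps $S \cong \Omega^*_T(k) \to \Omega^*_T(G/B)$ (from $G/B \to \Spec k$) and ${\bf c}^{\rm eq}_{G/B} : S \cong \Omega^*_G(G/B) \to \Omega^*_T(G/B)$ (the change-of-group map) correspond under $\Phi^{\rm top}$ to the topological maps $p^*_{T,G/B}$ and $\pi^*_{G/B}$ in diagram~\eqref{eqn:BBG}. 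This compatibility is exactly what is asserted in the sentences preceding the theorem, where ${\bf c}^{\rm eq}_{G/B}$ is identified as "the algebraic analogue of" $\pi^*_X$; one needs to check that the identification $S \cong MU^*(B_T)$ on the coefficient side intertwines these, which follows because $\Phi^{\rm top}$ is a morphism of oriented cohomology theories and hence sends first equivariant Chern classes to first equivariant Chern classes, and $S$ is generated (as a graded power series ring) by such classes.

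Next I would assemble the square
\begin{equation*}
\xymatrix{
S \otimes_{S^W} S \ar[r]^-{\Psi^{\rm alg}_{G/B}} \ar[d]_{\cong} & \Omega^*_T(G/B) \ar[d]^{\Phi^{\rm top}_{G/B}}_{\cong} \\
S \otimes_{S^W} S \ar[r]_-{\Psi^{\rm top}_{G/B}} & MU^{2*}_T(G/B)
}
\end{equation*}
where the left vertical map is the identity on the relevant copy of $S \otimes_{S^W} S$ (here using the identification $S \cong MU^*(B_T)$ and $S^W$ with the same ring of invariants, which is legitimate since we work over $R = \Z[t_G^{-1}]$ and Proposition~\ref{prop:Inv-top} identifies $MU^*(B_G)$ with $(MU^*(B_T))^W = S^W$). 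On generators $a \otimes b$, the top-then-right composite sends $a \otimes b \mapsto a \cdot {\bf c}^{\rm eq}_{G/B}(b) \mapsto \Phi^{\rm top}(a) \cdot \pi^*_{G/B}(\Phi^{\rm top}(b))$, while the left-then-bottom composite sends $a \otimes b \mapsto a \otimes b \mapsto p^*_{T,G/B}(a) \cdot \pi^*_{G/B}(b)$; by the compatibility established in the first step these agree, so the square commutes. Since three of the four maps are isomorphisms over $R$ — the left vertical trivially, $\Phi^{\rm top}_{G/B}$ by Corollary~\ref{cor:flagT}, and $\Psi^{\rm top}_{G/B}$ (the $X = \mathrm{pt}$ case of Corollary~\ref{cor:iso}, i.e. $MU^*(G/B) \cong S \otimes_{S^W} S$ together with Lemma~\ref{lem:BBG-top}(ii)) — it follows formally that $\Psi^{\rm alg}_{G/B}$ is an isomorphism over $R$ as well.

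The main obstacle I anticipate is the bookkeeping in the first step: verifying carefully that $\Phi^{\rm top}_{G/B}$ is genuinely a map of $S$-algebras for the two distinct $S$-module structures in play, and in particular that the change-of-group map ${\bf c}^{\rm eq}_{G/B}$ on the algebraic side really does match $\pi^*_{G/B}$ on the topological side after the coefficient identifications. This requires knowing that the comparison map $\Omega^*_G(-) \to MU^{2*}_G(-)$ is natural in the group $G$ for the restriction-along-$T \hookrightarrow G$ maps, and that it is a ring map taking the canonical orientation to the canonical orientation — facts that are implicit in \cite[Section~7]{Krishna1} but should be spelled out. Everything else is a diagram chase: once the square commutes and three arrows are invertible, the fourth is too, with no degree or finiteness subtleties since all rings here are graded and the identifications are degree-doubling in a uniform way.
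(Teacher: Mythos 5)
Your argument is essentially the paper's own proof: the paper draws the same diagram (as a triangle, with $\Psi^{\rm top}_{G/B}$ as the diagonal map) and concludes by two-out-of-three from Corollary~\ref{cor:flagT} and Corollary~\ref{cor:iso}, with the compatibility of ${\bf c}^{\rm eq}_{G/B}$ and the structure map with $\pi^*_X$ and $p^*_{T,X}$ asserted just before the theorem, exactly as you describe. One small correction: the isomorphism $S \otimes_{S^W} S \xrightarrow{\cong} MU^*_T(G/B)$ is the $X = G/B$ case of Corollary~\ref{cor:iso} (using $MU^*_G(G/B) \cong MU^*(B_T) \cong S$), not the $X = \mathrm{pt}$ case.
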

\begin{proof}

Using Corollary \ref{cor:iso}, we get a diagram
\begin{equation}\label{eqn:ECF-MainT1}
\xymatrix{
S \otimes_{S^W} S \ar[r]^{\Psi^{\rm alg}_{G/B}} \ar[dr]_{\Psi^{\rm top}_{G/B}} &
\Omega^*_T(G/B) \ar[d]^{\Phi^{\rm top}_{G/B}} \\
&MU^{*}_T(G/B)}
\end{equation}
which commutes by the above comparison of the various algebraic and topological
maps. The right vertical map is an
isomorphism by Corollary~\ref{cor:flagT} and the diagonal map is an
isomorphism by Corollary \ref{cor:iso}. We conclude that
$\Psi^{\rm alg}_{G/B}$ is an isomorphism too.
\end{proof}

Note that for $G=GL_n$, Theorem \ref{thm:ECF-MainT} reduces to Theorem \ref{thm:GL_n} since $R=\Z$ for $GL_n$.
However, the proof of Theorem \ref{thm:ECF-MainT} involves fewer computations and, in particular, does not rely on computation of ordinary cobordism rings. On the contrary, the ordinary cobordism ring can be easily recovered from Theorem \ref{thm:ECF-MainT}. The following result improves \cite[Theorem~8.1]{Krishna2} which was proven with the rational coefficients. The result below
also improves the computation of the non-equivariant cobordism ring of
$G/B$ in \cite[Theorem~13.12]{CPZ}, where a presentation of $\Omega^*(G/B)$
was obtained in terms of the completion of $S$ with respect to its augmentation ideal.
\begin{cor}\label{cor:OCFV}
There is an $R$-algebra isomorphism
\[
S \otimes_{S^W} \bL \xrightarrow{\cong} \Omega^*(G/B).
\]
\end{cor}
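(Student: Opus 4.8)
The plan is to deduce Corollary~\ref{cor:OCFV} from Theorem~\ref{thm:ECF-MainT} by passing from the $T$-equivariant cobordism of $G/B$ to its ordinary cobordism. The key observation is that there is a general relation between equivariant and ordinary cobordism: for a smooth scheme $Y$ with a $T$-action, one has $\Omega^*(Y) \cong \Omega^*_T(Y) \otimes_S \bL$, where $S = \Omega^*_T(k)$ acts on $\bL = \Omega^*(k)$ via the augmentation $S \to \bL$ sending each $t_i$ to $0$ (this is \cite[Theorem~3.4]{Krishna2}, cited in the introduction, and holds at least after inverting $t_G$, which is all we need since everything is already taken over $R$). Applying this with $Y = G/B$ gives $\Omega^*(G/B) \cong \Omega^*_T(G/B) \otimes_S \bL$.

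First I would invoke Theorem~\ref{thm:ECF-MainT} to replace $\Omega^*_T(G/B)$ by $S \otimes_{S^W} S$, where the $S$-algebra structure used in the tensor product $-\otimes_S \bL$ is the one coming from the structure map $G/B \to \Spec(k)$, i.e.\ multiplication on the \emph{right} factor of $S \otimes_{S^W} S$ (this is the algebraic analogue of $p^*_{T,G/B}$, as noted just before the statement of Theorem~\ref{thm:ECF-MainT}). So I get
\[
\Omega^*(G/B) \cong (S \otimes_{S^W} S) \otimes_S \bL,
\]
where the rightmost tensor product kills the second factor down to $\bL$. Then the associativity of tensor product gives $(S \otimes_{S^W} S)\otimes_S \bL \cong S \otimes_{S^W}(S \otimes_S \bL) \cong S \otimes_{S^W} \bL$, since $S \otimes_S \bL = \bL$ with its natural $S^W$-algebra structure via $S^W \inj S \surj \bL$. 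Chasing through the identifications, the composite isomorphism is exactly the $R$-algebra map $S \otimes_{S^W} \bL \to \Omega^*(G/B)$ sending $a \otimes \lambda$ to $\lambda \cdot {\bf c}^{\rm eq}_{G/B}(a)$ composed with the projection to ordinary cobordism, which is the natural map; this verifies it is a ring homomorphism and not merely an isomorphism of modules.

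The main subtlety I expect is making sure the relevant form of ``$\Omega^*(Y) = \Omega^*_T(Y)\otimes_S \bL$'' is available in the needed generality and compatible with ring structures. One has to be a little careful: the clean statement $\Omega^*_T(Y)\otimes_S \bL \xrightarrow{\cong}\Omega^*(Y)$ is known with rational coefficients (\cite[Theorem~3.4]{Krishna2}) and more subtly over $R$; for $G/B$ one can sidestep any difficulty because Theorem~\ref{thm:ECF-MainT} already exhibits $\Omega^*_T(G/B)$ as a free $S$-module (on a basis $\{1 \otimes i^*(\varrho_w)\}_{w\in W}$ via the left factor), and freeness makes the base-change argument and the identification of the quotient by the augmentation ideal completely formal — one checks directly that $\Omega^*(G/B)$ is free over $\bL$ of the same rank $|W|$ with the corresponding basis, using that $G/B$ is cellular. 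So the proof reduces to: (i) cite the equivariant-to-ordinary comparison, (ii) substitute Theorem~\ref{thm:ECF-MainT}, (iii) simplify the iterated tensor product, and (iv) check the map is the natural $R$-algebra homomorphism. Step (i), pinning down the precise reference valid over $R$, is the only place requiring genuine care rather than bookkeeping.
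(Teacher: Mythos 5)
Your proposal is correct and follows essentially the same route as the paper, whose proof is precisely the one-line combination of Theorem~\ref{thm:ECF-MainT} with the base-change relation $\Omega^*(G/B)\cong\Omega^*_T(G/B)\otimes_S\bL$ from \cite[Theorem~3.4]{Krishna2}. The extra care you take about the coefficient ring and the freeness of $\Omega^*_T(G/B)$ over $S$ is a reasonable elaboration of what the paper leaves implicit, not a different argument.
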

\begin{proof} This follows immediately from Theorem~\ref{thm:ECF-MainT}
and \cite[Theorem~3.4]{Krishna2}.
\end{proof}

\subsection{Divided difference operators} \label{section:Dem}
Various definitions of {\em generalized divided difference} (or {\em Demazure}) {\em operators} were given in \cite{BE} for complex cobordism and in \cite{HK,CPZ} for algebraic cobordism in order to establish Schubert calculus in $MU^*(G/B)$ and $\Omega^*(G/B)$.
Corollary \ref{cor:OCFV} allows us to compare these definitions.
We also outline Schubert calculus in equivariant cobordism using Theorem \ref{thm:ECF-MainT}.

Denote by $x_{\chi} \in S$ the first $T$-equivariant Chern class $c^T_1(L_{\chi})$ of the $T$-equivariant line bundle $L_{\chi}$ on ${\rm Spec}(k)$ associated with the character $\chi$ of $T$.
Recall that the isomorphism $S=\bL[[t_1,\ldots,t_n]]\simeq\Omega^*_T(k)$ sends $t_i$ to $x_{\chi_i}$ where $\chi_i$ is the $i$-th basis character of $T$.
The Weyl group $W_G$ acts on $S$: an element $w\in W_G$ sends $x_{\chi}$ to $x_{w\chi}$.
For each simple root $\alpha$, define an $\bL$-linear operator $\del_\alpha$ on the ring $S$:
$$\del_\alpha: f\mapsto (1+s_\alpha)\frac{f}{x_{-\alpha}},$$
where $s_\alpha\in W$ is the reflection corresponding to the root $\alpha$.
One can show that $\del_\alpha$ is indeed well-defined using arguments of \cite[Section 5]{HK} (in \cite{HK} the ring of all power series is considered but it is easy to check
that $\del_\alpha(f)$ is homogeneous if $f$ is homogeneous). It is also easy to check that $\del_\alpha$ is $S^W$-linear. In particular, $\del_\alpha$ descends to $S \otimes_{S^W} \bL$.

The comparison result below follows directly from definitions and Corollary \ref{cor:OCFV}.

\begin{enumerate}

\item Under the isomorphism $MU^*(B_T)\simeq S$, the operator $C_\alpha$ considered in
    \cite[Proposition 3]{BE} coincides with the operator $\del_\alpha$.

\item Under the isomorphism of $S \otimes_{S^W} \bL\simeq\Omega^*(G/B)$, the operator $\del_\alpha$ descends to the operator $A_\alpha$ defined in \cite[Section 3]{HK}.

\item The operator $\del_\alpha$ coincides with the restriction of the operator $C_\alpha$ from \cite[Definition 3.11]{CPZ} from the ring of all power series to $S$.
\end{enumerate}

Note that most of the operators considered above also have geometric meaning (see \cite{BE,HK,CPZ} for details). In particular, they were used to compute the {\em Bott-Samelson classes} in cobordism.

We now define an {\em equivariant generalized Demazure operator} $\del^T_\alpha$ on $S\otimes_{S^W}S$:
$$\del^T_\alpha: f\otimes g\mapsto \del_\alpha(f)\otimes g.$$
It is well-defined since $\del_\alpha$ is $S^W$-linear.
It follows immediately from Theorem~\ref{thm:ECF-MainT} that $\del^T_{\alpha}$
defines an $S$-linear operator on $\Omega^*_T(G/B)$.
Similarly to the ordinary cobordism, these operators can be used to compute the {\em equivariant Bott-Samelson classes}. We outline the main steps but omit those details that are the same as for the ordinary cobordism. We use notation and definitions of \cite{HK}.

Recall that to each sequence $I=\{\alpha_{1},\ldots,\alpha_{l}\}$ of simple roots, there corresponds a smooth {\em Bott-Samelson variety} $R_I$ endowed with an action of $B$ such that there is a $B$-equivariant map $R_I\to G/B$. In particular, each $R_I$ gives rise to the cobordism class $Z_I=[R_I\to G/B]$ as well as to the $T$-equivariant cobordism class $[Z_I]^T$. The latter can be expressed as follows.

\begin{thm}\label{thm:BS}$$[Z_I]^T=\del^T_{\alpha_l}\ldots\del^T_{\alpha_1}\left([pt]^T\right)$$
\end{thm}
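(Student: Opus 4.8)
The plan is to mimic the well-known inductive computation of Bott--Samelson classes in ordinary (co)homology and in ordinary cobordism (as in \cite[Section~3]{HK}), transporting each step to the $T$-equivariant setting via Theorem~\ref{thm:ECF-MainT}. First I would set up the induction on the length $l$ of the sequence $I = \{\alpha_1, \ldots, \alpha_l\}$. The base case $l = 0$ is the empty sequence, where $R_{\emptyset} = \Spec(k)$ mapping to the base point of $G/B$, so $[Z_{\emptyset}]^T = [pt]^T$ and there are no operators to apply. For the inductive step, recall the standard tower construction: if $I' = \{\alpha_1, \ldots, \alpha_{l-1}\}$, then $R_I \to R_{I'}$ is a $\P^1$-bundle, namely $R_I = R_{I'} \times^{B} P_{\alpha_l}/B$ where $P_{\alpha_l}$ is the minimal parabolic associated to $\alpha_l$, and the $B$-equivariant map $R_I \to G/B$ factors as $R_I \to R_{I'} \xrightarrow{} G/B$ followed by the action of $P_{\alpha_l}$. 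Passing to the $T$-equivariant cobordism class, pushing forward along $R_I \to G/B$ amounts to first pushing forward along the $\P^1$-fibration $R_I \to R_{I'}$ and then along $R_{I'} \to G/B$.

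The heart of the argument is to identify the effect of the $\P^1$-bundle push-forward with the operator $\del^T_{\alpha_l}$ on $\Omega^*_T(G/B) \cong S \otimes_{S^W} S$. Here one uses the projective bundle formula in equivariant cobordism together with the explicit form of the formal group law: for a $\P^1$-bundle $\P(\sE) \to Y$ with $\sE$ of rank $2$, the push-forward of a class is computed by a ``residue''-type formula, and when one writes things in terms of the equivariant Chern root $x_{-\alpha_l}$ of the relevant line bundle (the one cut out by the section corresponding to $B \subset P_{\alpha_l}$), this push-forward is exactly $f \mapsto (1 + s_{\alpha_l})\bigl(\tfrac{f}{x_{-\alpha_l}}\bigr)$, i.e.\ the operator $\del_{\alpha_l}$ acting on the first tensor factor. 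This is the step where I would invoke the computation from \cite[Section~3]{HK} (the non-equivariant analogue) and upgrade it equivariantly, checking that the $T$-linearity and the $S^W$-linearity make everything descend correctly to $S \otimes_{S^W} S$; the well-definedness of $\del^T_{\alpha_l}$ was already noted in the text just before the statement, so that is not an obstacle.

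Concretely, I would argue: by induction $[Z_{I'}]^T = \del^T_{\alpha_{l-1}} \cdots \del^T_{\alpha_1}([pt]^T)$, and then
\[
[Z_I]^T = (\text{push-forward along } R_I \to R_{I'})\bigl([Z_{I'}]^T\bigr) = \del^T_{\alpha_l}\bigl([Z_{I'}]^T\bigr),
\]
where the first equality is functoriality of push-forward applied to the factorization $R_I \to R_{I'} \to G/B$ (using that $[Z_{I'}]^T$ is the push-forward of $1$ along $R_{I'} \to G/B$ and the projection formula), and the second is the $\P^1$-bundle identification above. Composing gives the claimed formula.

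The main obstacle I anticipate is making the $\P^1$-bundle push-forward computation genuinely precise in the \emph{equivariant} cobordism ring: one must correctly identify which $T$-equivariant line bundle on $R_{I'}$ (pulled back from $G/B$) has equivariant first Chern class $x_{-\alpha_l}$, keep careful track of the $B$-action versus the $T$-action (using $\Omega^*_T = \Omega^*_B$), and verify that the Demazure-type formula coming from the formal group law is literally $(1+s_{\alpha_l})\tfrac{(-)}{x_{-\alpha_l}}$ rather than some variant differing by a unit or a sign. Once this local $\P^1$-computation is pinned down exactly as in \cite{HK} but with equivariant parameters, the induction closes immediately. The remaining details---compatibility of all the push-forwards, homogeneity, and the reduction $\Omega^*_T(G/B) \cong S\otimes_{S^W}S$---are routine given Theorem~\ref{thm:ECF-MainT}.
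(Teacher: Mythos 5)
There is a genuine gap in your inductive step. The Bott--Samelson map $\mu_I\colon R_I\to G/B$, $[(p_1,\dots,p_l)]\mapsto p_1\cdots p_lB$, does \emph{not} factor through the projection $R_I\to R_{I'}$ that forgets the last factor: the composite $R_I\to R_{I'}\to G/B$ sends $[(p_1,\dots,p_l)]$ to $p_1\cdots p_{l-1}B$, which differs from $\mu_I$ on every fiber of the $\P^1$-bundle. So the "functoriality of push-forward applied to the factorization $R_I\to R_{I'}\to G/B$" that your key displayed equality rests on is not available; moreover that equality does not even typecheck, since the push-forward along $R_I\to R_{I'}$ takes classes in $\Omega^*_T(R_I)$ to classes in $\Omega^*_T(R_{I'})$, whereas $[Z_I]^T$ and $[Z_{I'}]^T$ live in $\Omega^*_T(G/B)$. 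The missing idea is the one that makes the induction in \cite[Theorem 3.2]{HK} work: the Cartesian (and transversal) square identifying $R_I\cong R_{I'}\times_{G/P_{\alpha_l}}G/B$, from which base change and the projection formula give $[Z_I]^T=r^{P_{\alpha_l}}_T r^T_{P_{\alpha_l}}\bigl([Z_{I'}]^T\bigr)$, i.e. the operator acting on $\Omega^*_T(G/B)$ is pull--push along $G/B\to G/P_{\alpha_l}$ (equivariantly, restriction of groups composed with push-forward), not push-forward along $R_I\to R_{I'}$.

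Relatedly, your "residue-type" $\P^1$-bundle computation is applied to the wrong fibration. The paper's key step (Lemma \ref{lemma:BS}) identifies $\del^T_{\alpha}$ with the composite $r^{P_\alpha}_T r^T_{P_\alpha}$ by applying the Vishik--Quillen formula \cite[Proposition 2.1]{HK} to the $\P^1$-fibrations of mixed spaces $G/B\times^T U_j\to G/B\times^{P_\alpha}U_j$ (for a sequence of good pairs for the $P_\alpha$-action), and then passing to the limit; this is where the formula $(1+s_\alpha)\frac{(-)}{x_{-\alpha}}$ enters, and it is an operator identity on $\Omega^*_T(G/B)$. Your plan of extracting $\del^T_{\alpha_l}$ from the bundle $R_I\to R_{I'}$ cannot produce such an operator identity on $\Omega^*_T(G/B)$ without first routing through the fiber square above. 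Once you replace the false factorization by the fiber-product identification plus transversal base change, and apply Vishik--Quillen to $G/B\times^T U_j\to G/B\times^{P_{\alpha_l}}U_j$ rather than to $R_I\to R_{I'}$, the rest of your outline (induction on $l$, base case $[Z_\emptyset]^T=[pt]^T$, well-definedness of $\del^T_\alpha$ on $S\otimes_{S^W}S$) does close the argument, and is then essentially the paper's proof.
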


The key ingredient is the following geometric interpretation of $\del^T_{\alpha}$.
Denote by $P_\alpha$ the minimal parabolic subgroup corresponding to the root $\alpha$.

\begin{lem}\label{lemma:BS} The operator $\del^T_\alpha$ is the composition of the change of group homomorphism $r^{P_\alpha}_T : \Omega^*_{P_\alpha}(G/B) \to \Omega^*_T(G/B)$ and the push-forward map
$r_{P_\alpha}^T : \Omega^*_T(G/B)\to \Omega^*_{P_\alpha}(G/B) $:
$$\del_\alpha=r^{P_\alpha}_T r_{P_\alpha}^T.$$
\end{lem}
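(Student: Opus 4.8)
The plan is to reduce the statement to a computation on the fibration $G/B \xrightarrow{i} B_T \xrightarrow{\pi} B_G$ whose fiber is the flag variety itself, and to identify the two maps $r^{P_\alpha}_T$ and $r^T_{P_\alpha}$ geometrically via the partial flag bundle $G/B \to G/P_\alpha$, which is a $\P^1$-bundle. First I would fix notation: under the isomorphism $\Omega^*_T(G/B) \cong S \otimes_{S^W} S$ from Theorem~\ref{thm:ECF-MainT}, the change-of-group map $r^{P_\alpha}_T$ corresponds, via the analogous presentation $\Omega^*_{P_\alpha}(G/B) \cong S \otimes_{S^W} S^{W_\alpha}$ (where $W_\alpha = \{1, s_\alpha\}$ is the subgroup of $W$ generated by the simple reflection $s_\alpha$), to the inclusion $S \otimes_{S^W} S^{W_\alpha} \hookrightarrow S \otimes_{S^W} S$ induced by $S^{W_\alpha} \hookrightarrow S$ in the second factor. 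This presentation for $\Omega^*_{P_\alpha}(G/B)$ follows by exactly the same argument as Theorem~\ref{thm:ECF-MainT} applied with $P_\alpha$ in place of $B$: one has $\Omega^*_{P_\alpha}(G/B) \cong \Omega^*_{L}(P_\alpha/B)$ for $L$ a Levi, and $P_\alpha/B \cong \P^1$, so the Leray–Hirsch argument (Theorem~\ref{thm:Top-LHT}) through $B_T \to B_{P_\alpha}$ gives $\Omega^*_{P_\alpha}(G/B) \cong S \otimes_{S^{W_\alpha}} \Omega^*_{P_\alpha}(G/P_\alpha) \cong S \otimes_{S^W} S^{W_\alpha}$, using that $MU^*(B_{P_\alpha}) \cong S^{W_\alpha}$ by the same proof as Proposition~\ref{prop:Inv-top}.

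Next I would compute the push-forward $r^T_{P_\alpha}\colon \Omega^*_T(G/B) \to \Omega^*_{P_\alpha}(G/B)$. Approximating $B_{P_\alpha}$ and $B_T$ by the spaces $E_j/T$ and $E_j/P_\alpha$ coming from a sequence of good pairs, the map $E_j/T \to E_j/P_\alpha$ is, after the twist by $G/B$, the $\P^1$-bundle $(G/B) \times^T U_j \to (G/B) \times^{P_\alpha} U_j$ obtained from the associated $\P^1 = P_\alpha/B$; equivalently, $\Omega^*_T(G/B) \to \Omega^*_{P_\alpha}(G/B)$ is identified with the push-forward along the relative flag bundle $\Omega^*_{P_\alpha}(G/B)$-algebra. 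In the presentation $S \otimes_{S^W} S$, the ring $S$ in the first (torus) factor is generated over $S^{W_\alpha}$ by $x_{\chi}$ for a fixed $\chi$ with $\langle \chi, \alpha^\vee \rangle = 1$ — and the whole first factor is a free rank-$2$ module over $S^{W_\alpha}$ with basis $\{1, x_{-\alpha}\}$ (or some convenient choice). The push-forward along a $\P^1$-bundle with relative tautological class having Chern roots related to $x_\chi$ and $x_{s_\alpha\chi}$ is computed by the standard residue formula: $p_*(f) = \mathrm{Res}$, which on the two basis elements sends $1 \mapsto 0$ and $x_{-\alpha}\mapsto 1$ (up to normalization), and in general $p_*(f) = \frac{f}{x_{-\alpha}} + \frac{s_\alpha f}{x_{\alpha}} = \del_\alpha(f)$ in the first factor, tensored with the identity on the second. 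This is precisely the statement that $r^{P_\alpha}_T \circ r^T_{P_\alpha} = \del^T_\alpha$ once we note that composing with the inclusion $r^{P_\alpha}_T$ amounts to viewing $\del_\alpha(f) \in S^{W_\alpha} \subseteq S$, which is consistent because $\del_\alpha$ lands in $S^{W_\alpha}$.

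The main obstacle, and the step requiring the most care, is the precise identification of the push-forward formula along the $\P^1$-bundle in cobordism — unlike in Chow theory, the formal group law is nontrivial, so the Chern roots of the relevant line bundles on $P_\alpha/B$ are not simply $\pm$ of a character but involve the formal inverse and formal sum. Concretely, I would use the projective bundle formula: if $\sL$ is the relevant $T$-equivariant line bundle on $G/B$ restricting to $\sO(1)$ on each $P_\alpha/B$-fiber with equivariant Chern class related to $x_{-\alpha}$, then $\Omega^*_T(G/B)$ is free over $\Omega^*_{P_\alpha}(G/B)$ on $\{1, c_1^T(\sL)\}$ and the push-forward is determined by $p_*(1) = 0$, $p_*(c_1^T(\sL)) = 1$; expanding a general element and using $s_\alpha$-invariance of the target recovers $p_*(f) = (1 + s_\alpha)\bigl(f/x_{-\alpha}\bigr)$ after matching the formal-group-law corrections — here one checks that the "denominator'' $x_{-\alpha}$ is exactly $c_1^T$ of the line bundle $\sL_{-\alpha} = G/B \times^B L_{-\alpha}$, whose restriction to $P_\alpha/B$ is $\sO(-2)$ with a twist, so that the residue computation goes through verbatim as in \cite[Section~5]{HK} and \cite[Definition~3.11]{CPZ}. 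Once the non-equivariant version $\del_\alpha = r^{P_\alpha} r_{P_\alpha}$ on $\Omega^*(G/B)$ is established by this argument (which is essentially in \cite{HK}), the equivariant statement follows by base change along $S^W \to S$, using that both change-of-group and push-forward are compatible with the $S$-linear structure and with the isomorphism of Theorem~\ref{thm:ECF-MainT}.
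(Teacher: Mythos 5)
Your proposal goes wrong at the very first structural step: the maps $r^{P_\alpha}_T$ and $r^T_{P_\alpha}$ in the lemma are \emph{change-of-group} maps, realized on Borel constructions by the fibration $G/B\times^T U_j\to G/B\times^{P_\alpha}U_j$ (a $\P^1$-fibration with fiber $P_\alpha/B$ in the classifying-space direction, $r^T_{P_\alpha}$ being by definition the limit of the push-forwards along these maps); they are \emph{not} the pullback and push-forward along the $T$-equivariant projection $G/B\to G/P_\alpha$, which is how you frame the argument ("identify the two maps via the partial flag bundle $G/B\to G/P_\alpha$"). Correspondingly, your presentation $\Omega^*_{P_\alpha}(G/B)\cong S\otimes_{S^W}S^{W_\alpha}$ with $r^{P_\alpha}_T$ the inclusion in the \emph{second} factor is not correct: that ring, with that inclusion, is the presentation of $\Omega^*_T(G/P_\alpha)$ together with $\pi^*$. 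Running the Leray--Hirsch argument of Lemma~\ref{lem:BBG-top} for the fibration $G/P_\alpha\to X\times^{P_\alpha}E_G\to X\times^G E_G$ gives instead $\Omega^*_{P_\alpha}(G/B)\cong S^{W_\alpha}\otimes_{S^W}S$, with $r^{P_\alpha}_T$ induced by $S^{W_\alpha}\hookrightarrow S$ in the \emph{first} (equivariant-parameter) factor -- which is exactly why the composite acts on the first factor, as $\del^T_\alpha$ is defined to do. (Your auxiliary claim $\Omega^*_{P_\alpha}(G/B)\cong\Omega^*_L(P_\alpha/B)$ is false: $G/B$ as a $P_\alpha$-variety is not $P_\alpha/B$.) This confusion is fatal for your concluding step: the non-equivariant statement established in \cite{HK} concerns $\pi^*\pi_*$ for $G/B\to G/P_\alpha$, which in the presentation $S\otimes_{S^W}S$ is $\id\otimes\del_\alpha$, a different operator from $\del^T_\alpha=\del_\alpha\otimes\id$; so "base change along $S^W\to S$" from that statement does not yield the lemma.

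There is also a genuine error in the computation of the push-forward itself: determining $p_*$ by $p_*(1)=0$, $p_*(c_1^T(\sL))=1$ is the Chow/cohomology rule. In cobordism $p_*(1)\neq 0$ for a $\P^1$-bundle; indeed $p_*(1)=\del_\alpha(1)=\tfrac{1}{x_{-\alpha}}+\tfrac{1}{x_{\alpha}}$, which equals the class of $\P^1$ plus higher formal-group-law corrections. The correct tool -- and the whole content of the paper's one-line proof -- is the Vishik--Quillen formula \cite[Proposition~2.1]{HK} for $p^*p_*$ on a $\P^1$-fibration, applied to each $G/B\times^T U_j\to G/B\times^{P_\alpha}U_j$ (over a sequence of good pairs for $P_\alpha$), followed by passage to the limit; this avoids needing any presentation of $\Omega^*_{P_\alpha}(G/B)$ at all. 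Your draft does mention the right fibration in passing, but the surrounding identifications and the $p_*(1)=0$ step would have to be replaced along these lines for the argument to prove the stated lemma.
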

Similarly to \cite[Corollary 2.3]{HK}, this lemma follows from the Vishik-Quillen formula
\cite[Proposition 2.1]{HK} applied to $\P^1$-fibrations
$G/B\times^T U_j\to G/B\times^{P_\alpha}U_j$ (for a sequence of good pairs $\{(V_j,U_j)\}$ for the action of $P_\alpha$). Note here that $r^T_{P_{\alpha}}$
is defined by taking the limit over the push-forward
maps on the ordinary cobordism groups corresponding to the projective
morphism $G/B\times^T U_j\to G/B\times^{P_\alpha}U_j$.
Theorem \ref{thm:BS} then can be deduced from Lemma \ref{lemma:BS}  by the same arguments as in \cite[Theorem 3.2]{HK}.

\section{Cobordism ring of wonderful symmetric varieties}
\label{section:CWSV}
The wonderful (or more generally, regular) compactifications of symmetric varieties form a large class
of spherical varieties. In fact, much of the study of a very large class of
spherical varieties can be reduced to the case of symmetric varieties
({\sl cf.} \cite{Res}). In this section, we compute the equivariant cobordism ring of the wonderful symmetric
varieties of minimal rank (see Theorem~\ref{thm:ESPH}). A presentation  for the equivariant cohomology of
the wonderful group compactification analogous to Theorem~\ref{thm:ESPH} below was obtained by Littelmann and
Procesi in \cite{LP} and the corresponding result for the equivariant
Chow ring was obtained by Brion in \cite[Theorem~3.1]{Brion3}.
This result of Brion was later generalized to the case of wonderful symmetric varieties of
minimal rank by Brion and Joshua in \cite[Theorem~2.2.1]{BJ}.

Our proof of Theorem~\ref{thm:ESPH} follows the strategy of \cite{BJ}.
The two new ingredients in our case are the localization theorem for torus action in cobordism
({\sl cf.} \cite[Theorem~7.9]{Krishna2}),
and a divisibility result (Lemma \ref{lem:div}) in the ring $S = \Omega^*_T(k)$.

\subsection{Symmetric varieties}\label{subsection:symV}
We now define symmetric varieties and describe their basic structural properties following \cite{BJ}.
For the rest of the paper, we assume that $G$ is of adjoint type. Denote by $\Sigma^+$ the set of positive roots of $G$
with respect to the Borel subgroup $B$. Let $\Delta_G = \{\alpha_1, \ldots, \alpha_n\}$ be the set
of positive simple roots which form a basis of the root system and
let $\{s_{\alpha_1}, \cdots , s_{\alpha_n}\}$ be the set of associated reflections.
Since $G$ is adjoint, $\Delta_G$ is also a basis of the character group
$\wh T$. Recall that $W = W_G$ denotes the Weyl group of $G$.

Let $\theta$ be an involutive automorphism of $G$ and let $K \subset G$ be
the subgroup of fixed points $G^{\theta}$. The homogeneous space $G/K$ is
called a {\sl symmetric space}. Let $K^0$ denote the identity component of $K$
and set $T_K = \left(T \cap K\right)^0$. It is then known
(\cite[Lemma~1.4.1]{BJ}) that $K^0$ is reductive and the roots of $(K^0, T_K)$
are exactly the restrictions to $T_K$ of the roots of $(G, T)$. Moreover,
the Weyl group of $(K^0, T_K)$ is identified with $W^{\theta}$.
Let $P$ be a minimal {\em $\theta$-split} parabolic subgroup of $G$ (a  parabolic subgroup $P$ is {\em $\theta$-split} if $\theta(P)$ is opposite to $P$),
and $L = P \cap \theta(P)$ a $\theta$-stable Levi subgroup of $P$.
Then every maximal torus of $L$ is also $\theta$-stable. We assume that $T$ is such a
torus so that $T = T^{\theta} T^{-\theta}$ and the identity component $A = T^{-\theta, 0}$ is a maximal
{\em $\theta$-split} subtorus of $G$ (a torus is {\em $\theta$-split} if $\theta$ acts on it via the inverse map $g\mapsto g^{-1}$). The rank of such a torus $A$ is called the
{\sl rank of the symmetric space} $G/K$. Since $T^{\theta}\cap T^{-\theta}$ is
finite, we get
\[
{\rm rk}(G) \le {\rm rk}(K) + {\rm rk}(G/K)
\]
and the equality holds if and only if $T^{\theta, 0}$ is a maximal torus
of $K^0$ and $T^{- \theta, 0}$ is a maximal $\theta$-split torus.
If this happens, one says that the symmetric space $G/K$ is of {\sl minimal rank}.

Let $\Sigma_L \subset \Sigma$ be the set of roots of $L$, and $\Delta_L
\subset \Delta_G$ the subset of simple roots of $L$. If $p : \wh{T}
\to \wh{A}$ denotes the restriction map, then its image is a reduced root system
denoted by $\Sigma_{G/K}$ and $\Delta_{G/K} : = p\left(\Delta_G \setminus
\Delta_L\right)$ is a basis of $\Sigma_{G/K}$. This set is also identified
with $\{\alpha - \theta(\alpha) | \alpha \in \Delta_G \setminus \Delta_L\}$
under the projection $p$. Moreover, there is an exact
sequence
\begin{equation}\label{eqn:sym1}
1 \to W_L \to W^{\theta} \xrightarrow{p} W_{G/K} \to 1.
\end{equation}
A representative of the reflection of $W_{G/K}$ associated to the
root $\alpha - \theta(\alpha) \in \Delta_{G/K}$ is $s_{\alpha}s_{\theta(\alpha)}$.

\begin{defn}\label{defn:WCSV}
Let $G/K$ be a symmetric space as above. The {\sl wonderful compactification}
of $G/K$ is a smooth and projective $G$-variety $X$
such that \\
$(i)$ There is an open orbit of $G$ in $X$ isomorphic to $G/K$. \\
$(ii)$ The complement of this open orbit is the union of $r =
{\rm rk}(G/K)$ smooth prime divisors $\{X_1, \cdots X_r\}$ with strict normal
crossings. \\
$(iii)$ The $G$-orbit closures in $X$ are precisely the various intersections
of the above prime divisors. In particular, all $G$-orbit closures are smooth.
\\
$(iv)$ The unique closed orbit $X_1 \cap \cdots \cap X_r$ is isomorphic to
$G/P$.
\end{defn}
We say that $X$ is a {\sl wonderful symmetric variety}. This is said to be
of minimal rank if $G/K$ is so. The existence of such compactifications of
symmetric spaces is known by the work of De Concini-Procesi \cite{DeCP}
and De Concini-Springer \cite{DeCS}. A well-known example of a
wonderful symmetric variety is the space of complete conics (which is not
of minimal rank).

Possibly, the simplest example of symmetric varieties of minimal rank is
when $G = {\bf G} \times {\bf G}$ where ${\bf G}$ is a semisimple group of
adjoint type, and $\theta$ interchanges the factors. In this case,
we have $K = {\rm diag}({\bf G})$ and $G/K \cong {\bf G}$, where $G$ acts by left and right multiplications.
Furthermore,
$T = {\bf T} \times {\bf T}$ where ${\bf T}$ is a maximal torus of ${\bf G}$.
Thus, $T_K = {\rm diag}({\bf T})$, $A = \{(x, x^{-1})| x \in {\bf T}\}$,
$L = T$ and $W_K = W_{G/K} = {\rm diag}(W_{\bf G}) \subset
W_{\bf G} \times W_{\bf G} = W$. In this case, the variety $X$ is called
the wonderful group compactification.
We refer to \cite[Example~1.4.4]{BJ} for an exhaustive list of
symmetric spaces of minimal rank.

Let $X$ be the wonderful compactification of a symmetric space $G/K$ of
minimal rank. Let $Y \subset X$ denote the closure of $T/{T_K}$ in $X$.
It is known that $Y$ is smooth and is the toric variety associated to the
Weyl chambers of the root datum $(G/K, \Sigma_{G/K})$. Let $z$ denote the
unique $T$-fixed point of the affine $T$-stable open subset $Y_0$ of $Y$ given
by the positive Weyl chamber of $\Sigma_{G/K}$.
It is well known that $X$ has an isolated set of fixed points
for the $T$-action. Moreover, it is also known by \cite[\S 10]{Tch} that
$X$ contains only finitely many $T$-stable curves. We shall need the following
description of the fixed points and $T$-stable curves.

\begin{lem}$($\cite[Lemma~2.1.1]{BJ}$)$\label{lem:stableC}
$(i)$ The $T$-stable points in $X$(resp. $Y$) are exactly the points $w\cdot z$,
where $w \in W$ (resp. $W_K$) and these fixed points are parameterized by
$W/{W_L}$ (resp. $W_{G/K}$). \\
$(ii)$ For any $\alpha \in \Sigma^+ \setminus \Sigma^+_L$, there exists a unique
irreducible $T$-stable curve $C_{z, \alpha}$ which contains $z$ and on which $T$
acts through the character $\alpha$. The $T$-fixed points in $C_{z, \alpha}$ are
$z$ and $s_{\alpha} \cdot z$. \\
$(iii)$ For any $\gamma = \alpha - \theta (\alpha) \in \Delta_{G/K}$, there
exists a unique irreducible $T$-stable curve $C_{z, \gamma}$ which contains $z$
and on which $T$ acts through its character $\gamma$. The $T$-fixed points
in $C_{z, \gamma}$ are exactly $z$ and $s_{\alpha}s_{\theta(\alpha)} \cdot z$. \\
$(iv)$ The irreducible $T$-stable curves in $X$ are the $W$-translates of the
curves $C_{z, \alpha}$ and $C_{z, \gamma}$. They are all isomorphic to $\P^1$. \\
$(v)$ The irreducible $T$-stable curves in $Y$ are the $W_{G/K}$-translates of
the curves $C_{z, \gamma}$.
\end{lem}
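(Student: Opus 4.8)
The plan is to read this off from the local structure of $X$ near the closed orbit, after which the curves through $z$ come from a rank-one $\SL_2$-computation and the combinatorics of $Y$ from elementary toric geometry; this is the route of Brion and Joshua. The one genuinely geometric input is the following: by the theory of wonderful compactifications (\cite{DeCP}), writing $z$ for the base point $eP$ of the open Bruhat cell of $G/P$ (which, with the standing conventions, is the cone-point of $Y_0$), there is a $B^-$-stable — hence $T$-stable — affine open $X_z\subset X$ meeting $G/P$ in $R_u(P^-)\cdot z$, together with a $T$-equivariant isomorphism
\[
X_z \;\cong\; R_u(P^-)\times Y_0 ,
\]
where $R_u(P^-)=\prod_{\alpha\in\Sigma^+\setminus\Sigma^+_L}U_{-\alpha}$ acts on itself by translation (with $T$ acting on $U_{-\alpha}$ through $\alpha$), and $Y_0=X_z\cap Y\cong\A^r$ is the toric chart of $Y$ attached to the positive Weyl chamber of $\Sigma_{G/K}$, with $T$ acting linearly through the restricted simple roots $\gamma\in\Delta_{G/K}$. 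Thus $X_z$ is, $T$-equivariantly, an affine space of dimension $\dim X=|\Sigma^+\setminus\Sigma^+_L|+r$ whose unique $T$-fixed point is $z$.

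For (i): a $G$-orbit carrying a $T$-fixed point must have stabilizer of maximal rank, and among the orbits $G/K_S$ of $X$ this happens only for the closed orbit $G/P$; hence $X^T=(G/P)^T=\{w\cdot z:w\in W\}$, and since $\mathrm{Stab}_W(z)=W_P=W_L$ this is parameterized by $W/W_L$. For $Y$: it is the smooth complete toric variety of the fan of Weyl chambers of $\Sigma_{G/K}$, so $Y^T$ consists of the chamber cones, permuted simply transitively by $W_{G/K}$; via the surjection $W^\theta\twoheadrightarrow W_{G/K}$ with kernel $W_L$ coming from~\eqref{eqn:sym1}, these are exactly the $w\cdot z$ with $w\in W_K=W^\theta$, parameterized by $W_{G/K}$.

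For (ii)--(iii): the asserted curves are precisely the coordinate axes of the $T$-module $X_z$. For $\alpha\in\Sigma^+\setminus\Sigma^+_L$, the axis $U_{-\alpha}\cdot z$ has closure $C_{z,\alpha}$; in the rank-one subgroup $G_\alpha=\langle U_\alpha,U_{-\alpha}\rangle$ the point $z=eP$ is fixed by the Borel $B_\alpha=T_\alpha U_\alpha$ (since $U_\alpha\subset B\subset P$) but moved by $U_{-\alpha}\subset R_u(P^-)$, so $(G_\alpha)_z=B_\alpha$, the orbit $G_\alpha\cdot z\cong G_\alpha/B_\alpha\cong\P^1$ is already closed, and hence $C_{z,\alpha}=G_\alpha\cdot z\cong\P^1$ with $T$-fixed points $z$ and $s_\alpha\cdot z$. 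For $\gamma=\alpha-\theta(\alpha)\in\Delta_{G/K}$, the corresponding axis of $Y_0$ closes up in $Y$ to a toric $\P^1$, namely $C_{z,\gamma}$, which corresponds to the wall shared by the positive Weyl chamber and its $s_\gamma$-reflection and so joins the fixed points $z$ and $s_\gamma\cdot z$, with $s_\gamma\in W_{G/K}$ represented by $s_\alpha s_{\theta(\alpha)}$. Finally, there are no further $T$-stable curves through $z$: any such curve $C$ meets $X_z$ in a dense $T$-stable open subset, and its closure in the $T$-module $X_z$ is a $T$-stable curve through the origin; since $X$ has only finitely many $T$-stable curves (\cite[\S 10]{Tch}), no two of the weights of $X_z$ can be proportional, so every $T$-stable curve through the origin of $X_z$ is one of its $\dim X=|\Sigma^+\setminus\Sigma^+_L|+|\Delta_{G/K}|$ coordinate axes, i.e.\ one of the $C_{z,\alpha}$ or the $C_{z,\gamma}$, whence $C$ equals it.

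Then (iv) follows by translation: a complete irreducible $T$-stable curve $C\subset X$ has a $T$-fixed point $w\cdot z$, so $w^{-1}\cdot C$ is a $T$-stable curve through $z$ and hence equals some $C_{z,\alpha}$ or $C_{z,\gamma}$; thus $C$ is a $W$-translate of one of these, each $\cong\P^1$. For (v), the $(T/T_K)$-stable curves of the smooth complete toric variety $Y$ correspond to the walls of its fan, i.e.\ to the $W_{G/K}$-translates of the walls of the positive chamber, i.e.\ to the $W_{G/K}$-translates (lifted to $W_K$) of the curves $C_{z,\gamma}$. The one nontrivial ingredient in all of this is the local structure theorem of the first paragraph, which identifies $X_z$ as a $T$-variety with $R_u(P^-)\times Y_0$ and thereby pins down its weights; everything afterwards is the $\SL_2$-computation, a translation argument, standard toric geometry, and the compatibility of the combinatorial datum of $Y$ with the reflections $s_\alpha s_{\theta(\alpha)}$ recorded just before Definition~\ref{defn:WCSV}.
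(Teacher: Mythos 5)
The paper offers no proof of this statement: it is quoted verbatim from Brion--Joshua \cite[Lemma~2.1.1]{BJ}, so there is no in-paper argument to measure you against, and the right benchmark is the cited source. Your reconstruction follows essentially that route: the local structure theorem $X_z\cong R_u(P^-)\times Y_0$ for wonderful compactifications (De Concini--Procesi), a rank-one computation identifying $C_{z,\alpha}$ with the $\P^1$-orbit $G_\alpha\cdot z\cong G_\alpha/B_\alpha$ in the closed orbit, the toric description of the curves $C_{z,\gamma}$ in $Y$ via the walls of the Weyl-chamber fan together with the representative $s_\alpha s_{\theta(\alpha)}$ of $s_\gamma$, and translation by $W$ (resp. $W_{G/K}$) using that every complete $T$-stable curve contains a fixed point. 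The weight argument you give for exhaustion at $z$ is sound: finiteness of the $T$-stable curves (Tchoudjem, which the paper itself quotes) forces the tangent weights of the $T$-module $X_z$ to be pairwise non-positively-proportional, and then an irreducible $T$-stable curve through the origin of a linear $T$-module with such weights is a coordinate axis. Two steps are compressed. First, in (i) the assertion that only the closed orbit can carry a $T$-fixed point --- equivalently that the isotropy groups of the non-closed $G$-orbits do not have maximal rank --- is stated in one clause; it is true, but it does not follow from the local structure statement you quote and needs the description of orbit stabilizers of a wonderful compactification (each non-closed orbit fibres $G$-equivariantly over a partial flag variety with fibre a positive-rank symmetric-type homogeneous space of a Levi, whence the rank drop), so you should either supply that or cite it. Second, one should say a word on why $W_K$ (acting through $W_{G/K}$) really acts on $Y$ compatibly with the fan, which is part of the minimal-rank setup in \cite{BJ}. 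Neither point is a wrong turn; with them filled in, your proof is complete and is in substance the argument of the cited reference.
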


\subsection{Cobordism ring of symmetric varieties}
\label{subsection:CSV}
To prove our main result, we will also need the following result on
divisibility in the graded power series ring
$S=\bL[[t_1,\ldots,t_n]]$. We use notation of Subsection \ref{section:Dem}.

\begin{lem}\label{lem:div} For any $f\in S$ and any root $\alpha$, we have
\begin{equation}
f \equiv s_{\alpha}(f)  \ \ ({\rm mod} \ x_{\alpha}).
\end{equation}
\end{lem}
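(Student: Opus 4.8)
The plan is to reduce the claim to the single assertion that $x_{-\alpha}$ divides $f - s_\alpha(f)$ in $S$, which is essentially the well-definedness of the Demazure operator $\del_\alpha$ already used in Subsection~\ref{section:Dem}. Recall that $S = \bL[[t_1,\ldots,t_n]]$ and that $W$ acts by $w\cdot x_\chi = x_{w\chi}$; in particular $s_\alpha$ fixes $x_\chi$ whenever $\chi$ is orthogonal to $\alpha^\vee$, and sends $x_\alpha$ to $x_{-\alpha}$. First I would observe that it suffices to prove the statement for $f$ a monomial $x_{\chi_1}\cdots x_{\chi_m}$ in the generators (by $\bL$-linearity and continuity in the graded-power-series topology; the operator in question is $\bL$-linear and degree-preserving, so both sides are determined by their values on monomials).

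Next I would use the formal group law structure: for any two characters $\chi,\psi$ one has $x_{\chi+\psi} = F(x_\chi,x_\psi)$, where $F$ is the universal formal group law over $\bL$. Write $\fm$ for the ideal of $S$ generated by $x_\alpha$ (equivalently by $x_{-\alpha}$, since $x_{-\alpha} = \chi_\alpha^{-1}(x_\alpha)\cdot(\text{unit})$ — more precisely $x_{-\alpha} = [-1]_F(x_\alpha)$, which is a unit multiple of $x_\alpha$). Modulo $\fm$ we have $x_\alpha \equiv 0$, hence $x_{-\alpha}\equiv 0$ as well. The key computation is that for any character $\chi$,
\[
s_\alpha(x_\chi) = x_{\chi - \langle\chi,\alpha^\vee\rangle\alpha} = F\bigl(x_\chi, x_{-\langle\chi,\alpha^\vee\rangle\alpha}\bigr)
\]
and $x_{-\langle\chi,\alpha^\vee\rangle\alpha} = [-\langle\chi,\alpha^\vee\rangle]_F(x_\alpha) \in \fm$. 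Since $F(u,v) \equiv u \pmod v$ as formal power series, we get $s_\alpha(x_\chi) \equiv x_\chi \pmod{\fm}$. Therefore $s_\alpha$ induces the identity on $S/\fm = S/x_\alpha S$.

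Finally, since $s_\alpha$ is a ring automorphism that is the identity modulo $x_\alpha$, for any $f\in S$ the difference $f - s_\alpha(f)$ maps to $0$ in $S/x_\alpha S$, i.e. $f \equiv s_\alpha(f) \pmod{x_\alpha}$, which is exactly the assertion. The only point requiring a little care — and the mild obstacle — is the reduction to monomials and the passage to the limit: one must check that $\fm = x_\alpha S$ is a closed ideal in the graded-power-series topology so that the congruence, verified on the dense subset of polynomial elements, extends to all of $S$. This is immediate because $x_\alpha S$ is the kernel of the continuous $\bL$-algebra map $S \to S/x_\alpha S$ obtained by setting $x_\alpha = 0$ (concretely, after a change of coordinates making $x_\alpha$ one of the topological generators $t_i$), and in the graded setting each graded piece is finitely generated so closedness is automatic. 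With that in hand the argument is complete.
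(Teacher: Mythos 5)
Your proof is correct and follows essentially the paper's own route: the core step is the same formal-group-law divisibility $x_{s_\alpha\chi}-x_\chi=F(x_\chi,x_{k\alpha})-x_\chi\in(x_\alpha)$ (with $k=-(\chi,\alpha)$), and your observation that $s_\alpha$ preserves $(x_\alpha)$ and hence induces the identity on $S/(x_\alpha)$ is just a repackaging of the paper's check for $f=t_i$ together with its identity $fg-s_\alpha(fg)=(f-s_\alpha(f))g+s_\alpha(f)(g-s_\alpha(g))$ used to pass to monomials and then to all of $S$. One inessential caveat: your parenthetical change of coordinates making $x_\alpha$ one of the topological generators is not always available (a root need not be primitive in the character lattice, e.g.\ $\alpha=2\omega$ for $\SL_2$), but the closedness/limit point it was meant to justify follows anyway from elementary degree bookkeeping, which is also all the paper tacitly relies on when it reduces the statement to monomials.
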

\begin{proof}It is enough to check this lemma for all monomials in $t_1$,\ldots, $t_n$.

First, check the case $f=t_i$. For each $\chi\in{\wh T}$ we have $s_\alpha\chi=\chi-(\chi,\alpha)\alpha$, where $(\chi,\alpha)$ is integer. Put $k=-(\chi,\alpha)$.  We can express $x_\chi-x_{s_\alpha\chi}=x_\chi-x_{\chi+k\alpha}$ as a formal power series $H(x,y)\in\bL[[x,y]]$ in $x=x_{\chi}$ and $y=x_{\alpha}$ using the universal formal group law. Then $H(x,y)$ is homogeneous and divisible by $y$ \cite[(2.5.1)]{LM} so that the ratio $\frac{H(x,y)}{y}$ is a homogeneous power series. In particular, $t_i-s_\alpha(t_i)$ is divisible by $x_\alpha$.

Next, note that if the lemma holds for $f$ and $g$, then it also holds for $fg$, since
$fg-s_\alpha(fg)=(f-s_\alpha(f))g+s_\alpha(f)(g-s_\alpha(g))$.
In particular, the lemma holds for any monomial in $t_1$,\ldots, $t_n$ as desired.
\end{proof}

\begin{thm}\label{thm:ESPH}
Let $X$ be a wonderful symmetric variety of minimal rank. Then the composite
map
\begin{equation}\label{eqn:ESPH1}
s^G_T : \Omega^*_G(X) \to \left(\Omega^*_T(X)\right)^W \to
\left(\Omega^*_T(X)\right)^{W_K} \to  \left(\Omega^*_T(Y)\right)^{W_K}
\end{equation}
is a ring isomorphism with the rational coefficients.
\end{thm}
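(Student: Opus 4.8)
The plan is to follow the Brion--Joshua strategy \cite{BJ} for the equivariant Chow ring, replacing their Chow-theoretic localization by the cobordism localization theorem \cite[Theorem~7.9]{Krishna2} and their divisibility input by Lemma~\ref{lem:div}. First I would set up the localization framework: since $X$ (resp.\ $Y$) is smooth, projective, with isolated $T$-fixed points, after inverting the relevant characters the restriction maps $\Omega^*_T(X)_{\Q} \hookrightarrow \bigoplus_{w\in W/W_L} S_{\Q}$ and $\Omega^*_T(Y)_{\Q}\hookrightarrow \bigoplus_{w\in W_{G/K}} S_{\Q}$ are injective, with image cut out by congruences along the $T$-stable curves described in Lemma~\ref{lem:stableC}. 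Concretely, a tuple $(f_w)$ lies in the image of $\Omega^*_T(X)_{\Q}$ iff for each $T$-stable curve joining fixed points $w\cdot z$ and $w'\cdot z$ with character $\chi$, one has $f_w \equiv f_{w'} \pmod{x_{\chi}}$; and similarly for $Y$. The first real step is therefore to record these ``GKM-type'' descriptions of $\Omega^*_T(X)_{\Q}$, $\left(\Omega^*_T(X)_{\Q}\right)^{W}$, $\left(\Omega^*_T(X)_{\Q}\right)^{W_K}$, and $\left(\Omega^*_T(Y)_{\Q}\right)^{W_K}$, exactly as in \cite[\S2.2]{BJ}, using $W$-equivariance of the localization to track the group actions.

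Next I would analyze the composite $s^G_T$ at the level of these fixed-point descriptions. The map $\Omega^*_G(X)\to\left(\Omega^*_T(X)_{\Q}\right)^{W}$ is an isomorphism over $\Q$: this is the equivariant analogue of the rational statement that taking $W$-invariants of the $T$-equivariant theory recovers the $G$-equivariant theory, which follows from Corollary~\ref{cor:iso} combined with the known rational isomorphism $S(G)_{\Q}\cong S^W_{\Q}$ (Remark~\ref{remk:Alg-Winv}, i.e.\ \cite[Theorem~8.7]{Krishna1}); alternatively one argues directly via the localization theorem and a transfer/averaging argument as in \cite{BJ}. The second map $\left(\Omega^*_T(X)_{\Q}\right)^{W}\to\left(\Omega^*_T(X)_{\Q}\right)^{W_K}$ is a priori just an inclusion, and the heart of the theorem is that the restriction $\left(\Omega^*_T(X)_{\Q}\right)^{W_K}\to\left(\Omega^*_T(Y)_{\Q}\right)^{W_K}$ is an isomorphism and that the composite of the last two maps is also an isomorphism. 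For the restriction to $Y$: on fixed points, $X^T$ is parameterized by $W/W_L$ while $Y^T$ by $W_{G/K}=W^{\theta}/W_L$, so a $W_K$-invariant family $(f_w)_{w\in W/W_L}$ on $X$ is already determined by its values on $W_{G/K}$ once one checks it restricts to a legitimate element of $\Omega^*_T(Y)_{\Q}$, i.e.\ that the congruences along the curves $C_{z,\gamma}$ (the only $T$-stable curves in $Y$, by Lemma~\ref{lem:stableC}(v)) are satisfied --- and these congruences hold because the corresponding curves in $X$ impose them. Surjectivity onto $\left(\Omega^*_T(Y)_{\Q}\right)^{W_K}$ requires extending a $W_{G/K}$-indexed congruence-compatible family on $Y$ to a $W/W_L$-indexed one on $X$ compatible with \emph{all} $T$-stable curves $C_{z,\alpha}$ of $X$; here one uses $W_K$-equivariance to propagate values and Lemma~\ref{lem:div} (applied with $\alpha$ replaced by the relevant root $\chi$: for any $h\in S$, $h\equiv s_\chi(h)\pmod{x_\chi}$) to verify the new congruences along the curves $C_{z,\alpha}$ for $\alpha\in\Sigma^+\setminus\Sigma^+_L$.

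The main obstacle I expect is precisely this last surjectivity/extension step: one must check that the family produced on $X^T$ by $W_K$-translation of a given family on $Y^T$ is well-defined (independent of the choice of coset representative in $W_K/(W_K\cap W_L)$ or wherever an ambiguity arises) and that it satisfies the curve congruences attached to the ``new'' $T$-stable curves not already present in $Y$. Following \cite{BJ}, the well-definedness reduces to a compatibility between the $W_K$-action and the reflections $s_\alpha$, and the congruences reduce to instances of Lemma~\ref{lem:div} together with the fact (Lemma~\ref{lem:stableC}) that the character of $C_{z,\alpha}$ is $\alpha$ and its endpoints are $z$ and $s_\alpha\cdot z$, so the required relation $f_{z}\equiv f_{s_\alpha\cdot z}\pmod{x_\alpha}$ becomes $f_z\equiv s_\alpha(f_z)\pmod{x_\alpha}$ after identifying $f_{s_\alpha\cdot z}$ with $s_\alpha(f_z)$ via $W$-equivariance --- which is exactly Lemma~\ref{lem:div}. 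Finally, once the restriction to $Y$ and the composite are shown to be bijective, it remains to note that all the maps in \eqref{eqn:ESPH1} are ring homomorphisms (the localization embeddings and $W$-invariance are compatible with products), so $s^G_T$ is a ring isomorphism over $\Q$, completing the proof. I would also remark that, as in \cite{BJ}, essentially the same argument applies verbatim to any regular compactification of a minimal-rank symmetric space, since only the combinatorics of fixed points and $T$-stable curves is used.
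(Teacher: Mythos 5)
Your proposal follows essentially the same route as the paper's proof: the first arrow is handled rationally by \cite[Theorem~8.7]{Krishna1}, the localization theorem \cite[Theorem~7.9]{Krishna2} together with Lemma~\ref{lem:stableC} gives the GKM-type description of $\Omega^*_T(X)_{\Q}$ and $\Omega^*_T(Y)_{\Q}$, an invariant family is reduced to its value at the fixed point $z$, and Lemma~\ref{lem:div} shows the congruences coming from the curves $C_{z,\alpha}$, $\alpha\in\Sigma^+\setminus\Sigma^+_L$, are automatic, so that both $\left(\Omega^*_T(X)_{\Q}\right)^W$ and $\left(\Omega^*_T(Y)_{\Q}\right)^{W_K}$ are identified (via restriction to $z$) with the same subring of $S^{W_L}$ cut out by the congruences $f\equiv s_\alpha s_{\theta(\alpha)}(f)\ (\mathrm{mod}\ x_{\gamma})$ for $\gamma\in\Delta_{G/K}$. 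This is exactly how the paper argues.

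Two statements in your write-up are inaccurate and should be repaired, although they do not affect the underlying strategy. First, you claim that the intermediate restriction $\left(\Omega^*_T(X)_{\Q}\right)^{W_K}\to\left(\Omega^*_T(Y)_{\Q}\right)^{W_K}$ is itself an isomorphism; this is neither needed nor true in general: $X^T=W\cdot z$ decomposes into many $W_K$-orbits (in the group case $G=\mathbf{G}\times\mathbf{G}$ these orbits are parameterized by $W_{\mathbf{G}}$), whereas $Y^T=W_K\cdot z$ is a single orbit, so the source is strictly larger and only the composite starting from the $W$-invariants is an isomorphism. Second, and relatedly, a $W_K$-invariant family on $X^T$ is \emph{not} determined by its values on $W_{G/K}$, and propagating a family on $Y^T$ by $W_K$-translation does not reach all of $X^T$; the determination and the extension must use the full Weyl group: a $W$-invariant family has the form $f_{w\cdot z}=w(f_z)$ with $f_z\in S^{W_L}$ (well-defined since $W_L$ stabilizes $z$, and $f_z$ is automatically $W_L$-invariant for an element coming from $\left(\Omega^*_T(Y)\right)^{W_K}$ because $W_L\subset W_K$), and by $W$-equivariance it suffices to check the congruences along the curves through $z$, namely the conditions killed by Lemma~\ref{lem:div} and the conditions already imposed by $Y$. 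Your final paragraph does invoke $W$-equivariance in precisely this way, so the fix is only to replace the earlier ``$W_K$'' statements by ``$W$'' statements; with that correction your argument coincides with the paper's.
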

\begin{proof}
All the arrows in ~\eqref{eqn:ESPH1} are canonical ring homomorphisms.
The isomorphism of the first arrow follows from \cite[Theorem~8.7]{Krishna1}.
Thus, it suffices to show that the map $\left(\Omega^*_T(X)\right)^W
\to  \left(\Omega^*_T(Y)\right)^{W_K}$ is an isomorphism.
We prove this by adapting the argument of
\cite[Theorem~2.2.1]{BJ}.

Since $X$ has only finitely many $T$-fixed points and finitely many $T$-stable
curves, it follows from \cite[Theorem~7.9]{Krishna2} and
Lemma~\ref{lem:stableC} that $\Omega^*_T(X)$ is isomorphic as an $S$-algebra
to the space of tuples ${\left(f_{w\cdot z}\right)}_{w \in W/{W_L}}$ of elements of
$S$ such that
\[
f_{v\cdot z} \equiv f_{w\cdot z}  \ \ ({\rm mod} \ x_{\chi})
\]
whenever $v \cdot z$ and $w \cdot z$ lie in an irreducible $T$-stable curve
on which $T$ acts through its character $\chi$.  Under this isomorphism, the ring $S$ is identified with the constant
tuples $(f)$.

We deduce from this that $\left(\Omega^*_T(X)\right)^W$ is isomorphic, via the
restriction to the $T$-fixed point $z$, to the subring of $S^{W_L}$ consisting
of those $f$ such that
\begin{equation}\label{eqn:ESPH2}
v^{-1}(f) \equiv w^{-1}(f)  \ \ ({\rm mod} \ x_{\chi})
\end{equation}
for all $v, w$ and $\chi$ as above. Using Lemma~\ref{lem:stableC}, we
conclude that $\left(\Omega^*_T(X)\right)^W$ is isomorphic to the subring
of $S^{W_L}$  consisting of those $f$ such that
\begin{equation}\label{eqn:ESPH3}
f \equiv s_{\alpha}(f)  \ \ ({\rm mod} \ x_{\alpha})
\end{equation}
for $\alpha \in \Sigma^+ \setminus \Sigma^+_L$ and those $f$ such that
\begin{equation}\label{eqn:ESPH4}
f \equiv s_{\alpha} s_{\theta(\alpha)}(f)  \ \
({\rm mod} \ x_{\gamma})
\end{equation}
for $\gamma = \alpha - \theta(\alpha) \in \Delta_{G/K}$.
However, it follows from Lemma~\ref{lem:div} that ~\eqref{eqn:ESPH3} holds
for all $f \in S$. We conclude from this that $\left(\Omega^*_T(X)\right)^W$
is isomorphic to the subring of $S^{W_L}$  consisting of those $f$ such that
~\eqref{eqn:ESPH4} holds for $\gamma = \alpha - \theta(\alpha) \in \Delta_{G/K}$.

Doing the similar calculation for $Y$ and using Lemma~\ref{lem:stableC} and
\cite[Theorem~7.9]{Krishna2} again, we see that
$\left(\Omega^*_T(Y)\right)^{W_K}$ is isomorphic to the same subring of $S$.
This completes the proof of the theorem.
\end{proof}

\begin{remk}\label{remk:toric}
Since $Y$ is a smooth toric variety, $\Omega^*_T(Y)$ can be explicitly
calculated in terms of generators and relations using \cite[Theorem~1.1]{KU}.
Combining this with Theorem~\ref{thm:ESPH}, one gets a simple way of
computing the equivariant cobordism ring of wonderful symmetric varieties
of minimal rank.
\end{remk}

\begin{exm}
If $G=PSL_2(k)\times PSL_2(k)$, and $\theta$ interchanges both factors then $G/K\simeq PSL_2(k)$ admits a
unique wonderful compactification $X=\P^3$. Namely, $\P^3$ can be regarded as $\P(\End(k^2))$, where $G$ acts by
left and right multiplications.
The toric variety $Y$ is $\P^1$ in this case. The torus $T\subset G$ is two-dimensional,
and $S=\bL[[t_1,t_2]]$. Both $\Omega^*_T(X)$ and $\Omega^*_T(Y)$ can be computed explicitly:
$$\Omega^*_T(X)\simeq \bL[[x, t_1, t_2]]/((x^2-t_1^2t_2^2)^2);\quad
\Omega^*_T(Y)\simeq \bL[[x, t_1, t_2]]/((x-t_1t_2)^2).$$
The Weyl group $W_K\simeq\Z/2\Z$ acts by $x\mapsto -x$, $t_i\mapsto -t_i$ for $i=1$,$2$. It is easy to check directly
that $\Omega^*_T(X)^{W_K}\simeq\Omega^*_T(Y)^{W_K}$.
\end{exm}

\end{document}